\newtheorem{theorem}{Theorem}[section]
\newtheorem{corollary} [theorem]{Corollary}
\newtheorem{proposition}[theorem]{Proposition}
\theoremstyle{definition}\newtheorem{example}{Example}[section]}
\theoremstyle{definition}\newtheorem{remark}{Remark}[section]}
\newdefinition{defn}[thm]{Definition}
\newproof{pf}{Proof}
\begin{document}

\begin{frontmatter}

\title{Products of functions with bounded ${\rm Hess}^+$ complement}

\author[ab]{Andi Brojbeanu\corref{cor1}}
\ead{andi_bro@yahoo.com}
\author[ab]{Cornel Pintea\fnref{p1}} 
\ead{cpintea@math.ubbcluj.ro}

\cortext[cor1]{Corresponding author.}

\address[ab]{Babe\c{s}-Bolyai University, Faculty of Mathematics and Computer
Science, 400084 M. Kog\u alniceanu 1, Cluj-Napoca, Romania}

\begin{abstract}
\noindent We denote by  ${\rm Hess}^+(f)$ the set of all points $p\in\mathbb{R}^n$ such that the Hessian matrix $H_p(f)$ of the $C^2$-smooth function 
		$f:\mathbb{R}^n\longrightarrow\mathbb{R}$ is positive definite. In this paper we provide a class of norm-coercive polynomial functions with large ${\rm Hess}^+$ regions, as their ${\rm Hess}^+$ complements happen to be bounded. A detailed analysis concerning the  ${\rm Hess}^+$ region of a particular polynomial function along with some basic properties of its level curves, such as regularity, connectedness and convexity, is also provided.  For such functions we also prove several properties, such as connectedness and convexity, of their level sets for sufficiently large levels. Apart from the mentioned source of such examples we provide some sufficient conditions on two functions $f,g:\mathbb{R}^2\longrightarrow\mathbb{R}$ with  bounded ${\rm Hess}^+$ complements whose product $fg$ keeps having  bounded ${\rm Hess}^+$ complement as well.
\end{abstract}

\begin{keyword}
The ${\rm Hess}^+(f)$ set, Critical points, Morse functions\\
\noindent\textit{MSC}: 47H99, 55M25, 55M20
\end{keyword}

\end{frontmatter}

\section{Introduction}
	We noticed in \cite{Pintea-Tofan} that the polynomial function \[f_a:\mathbb{R}^2\longrightarrow\mathbb{R}, \ f_a(x,y)=(x^2+y^2)^2-2a^2(x^2-y^2)\]
	(whose regular level sets are the  Cassini's ovals and the critical zero level set is the Bernoulli's lemniscate) is not convex, as it has, for instance, 
	exactly two global minimum points. In fact $f_a$ is a Morse function with precisely three critical points, two of whom have index zero and one of them has index one.
	However, a detailed analysis done there, shows that starting with a certain level and going higher and higher we only get convex regular levels bounding convex sublevel sets. This is due to the curvature function which preserves its sign along such regular level sets. The first such regular level set is the one which is completely contained in the region ${\rm Hess}^+(f_a)$, where the Hessian matrix of $f_a$ is positive definite, and all levels above 
	it keep being convex and completely contained in ${\rm Hess}^+(f)$. Note that the same facts are valid for the polynomial function \[g_a:\mathbb{R}^2\longrightarrow\mathbb{R}, \ g_a(x,y)=(x^2+y^2)^2+2a^2(x^2-y^2)\] as can be similarly proved.
	The terminology of level and sublevel sets at the level $y\in\mathbb{R}$
	is used for the inverse images of types $f^{-1}(y)$ and $f^{-1}((-\infty,y])$ of real-valued functions 
	$f:\mathbb{R}^n\longrightarrow\mathbb{R}$, respectively. If the function $f$ is additionally convex, then its sublevel sets are obviously convex.
	Although the level sets are usually curved hypersurfaces, impossible to be convex subsets of $\mathbb{R}^n$, they could sometimes bound 
	convex sublevel sets and they are said to be convex in such a case \cite{Rybnikov} (see also  \cite[p. 175]{Montiel-Ros}). 
	Another way to consider convexity for regular hypersurfaces of $\mathbb{R}^n$ consists in 
	their quality to stay on the same side of all of their tangent hyperplanes \cite[p. 174]{Montiel-Ros}, \cite[p. 37]{Carmo}. 
	The property of the level sets of $f_a$ to be contained in ${\rm Hess}^+(f_a)$, 
	starting with a certain level, is due to the fact that its complement $\mathbb{R}^2\setminus
	{\rm Hess}^+(f_a)$ is bounded, as the sublevel sets of $f_a$ grow over and over and cover the whole space $\mathbb{R}^n$ as the level goes to infinity. 
	Such a phenomena can occur when we work with the product $fg$ of some functions, as the  complement $\mathbb{R}^2\setminus
	{\rm Hess}^+(fg)$ might be bounded in many cases when $\mathbb{R}^2\setminus {\rm Hess}^+(f)$ and $\mathbb{R}^2\setminus
	{\rm Hess}^+(g)$ are bounded, as Theorem \ref{th05.03.2021.2} and Proposition \ref{prop05.03.2021.2} show. For example the  complement $\mathbb{R}^2\setminus {\rm Hess}^+(f_ag_a)$ is bounded, as it will be proved in Example \ref{ex:15.01.2022.1}.

	\section{Critical points and Hessian matrices}
	Since critical points appear repeatedly in our approach, we will recall them quickly. If $f:\mathbb{R}^n\longrightarrow\mathbb{R}^m$ is a Fr\' echet differentiable mapping, then the {\em rank}\label{r12} of $f$ at $x\in \mathbb{R}^n$ is defined as ${\rm rank}_xf:=\mbox{rank}(df)_x=\mbox{rank}(Jf)_x$. Observe that 
	${\rm rank}_xf\leq\min\{m,n\}$ for every $x\in \mathbb{R}^n$. A point $x\in \mathbb{R}^n$ is said to be a {\em critical point} of $f$ 
	if ${\rm rank}_xf<\min\{m,n\}$. Otherwise $x$ is said to be a {\em regular point} of $f$. If $f:\mathbb{R}^n\longrightarrow\mathbb{R}^m$ is a $C^1$-smooth map, then each point $x\in \mathbb{R}^n$ has an open neighbourhood, 
	say $V_x\subseteq \mathbb{R}^n$, such that ${\rm rank}_yf\geq {\rm rank}_xf$, for all $y\in V_x$. In particular, once a point $x$ is regular, it has a whole neighbourhood 
	of regular points. Indeed the Jacobian matrix $(Jf)_x$ has a non-zero minor of order ${\rm rank}_xf$ 
	and all minors of $(Jf)_x$ of superior order are zero. But the nonzero minor of $(Jf)_x$ is nonzero on a whole open neighbourhood of $x$ since it is a continuous function. 
	This shows that ${\rm rank}_yf={\rm rank}_y(Jf)_y\geq \mbox{rank}(Jf)_x={\rm rank}_xf$, which are satisfied for $y$ in a whole neighbourhood of $x$.
	Consequently the set $R(f)$, of regular points of $f$, is open in 
	$\mathbb{R}^n$, while the set $C(f)$, of critical points of $f$, is closed in $\mathbb{R}^n$. The set of critical values of $f$ is $B(f):=f(C(f))$. 
	Note that for a real valued function $f:U\longrightarrow\mathbb{R}$, the critical set of $f$ is the vanishing set of its gradient $\nabla f$. Since 
	$d_p(fg)=f(p)(dg)_p+g(p)(df)_p$ it follows immediately that \[\nabla_p(fg)=f(p)\nabla_pg+g(p)\nabla_pf,\]
	where $\nabla_ph$ stands for the gradient of $h:\mathbb{R}^n\longrightarrow\mathbb{R}$ at $p\in\mathbb{R}^n$, i.e. $\nabla_ph=(h_{x_1}(p),\ldots,h_{x_n}(p))$.
	\begin{remark}\label{rem20.04.2021.1}
		If $f:\mathbb{R}^n\longrightarrow\mathbb{R}$ is a $C^1$-smooth convex function, then its critical set $C(f)$ is convex. 
		Indeed the critical points of $f$ coincide with the global minimum points of $f$ (see \cite[Theorem 2.5.7]{Zalinescu}). 
		In other words \[C(f)=\{p\in\mathbb{R}^n \ : \ f(p)\leq f(x), \ \forall x\in\mathbb{R}^n\}.\]
		Note that $C(f)=f^{-1}(-\infty,f(p)]$ for every $p\in C(f)$, i.e. $C(f)$ is a sublevel set of the convex function $f$ in this particular case. Therefore 
		$C(f)$ is convex.
		Once we know \cite[Theorem 2.5.7]{Zalinescu}, we can directly prove the convexity of $C(f)$ in this particular case. 
		If $p,q\in C(f)$, then $d_pf=d_qf=0$ and $p,q$ are therefore global minima of $f$, i.e. $f(p)=f(q)\leq f(u)$ for all $u\in\mathbb{R}^n$. Thus $f((1-t)p+tq)\leq(1-t)f(p)+tf(q)=f(p)=f(q)\leq f(u)$, 
		for all $t\in[0,1]$ and all $z\in\mathbb{R}^n$. This shows that $(1-t)x+ty$ is a global minimum point of $f$ for every $t\in[0,1]$ and therefore 
		$(1-t)x+ty\in C(f)$ for every $t\in[0,1]$.
	\end{remark}
	\noindent By using Remark \ref{rem20.04.2021.1} we shall provide examples of nonconvex functions which have, or are good candidates to have, bounded ${\rm Hess}^+$ complements.
	\begin{example}\label{ex03.05.2021.1}
		\begin{enumerate}
			\item By solving the equation $\nabla f_a=0$ one obtains the discrete solution set $C(f_a)=\{(-a,0),(0,0),(a,0)\}$, 
			which is the critical set of the polynomial function 
			\[f_a:\mathbb{R}^2\longrightarrow\mathbb{R}, \ f_a(x,y)=(x^2+y^2)^2-2a^2(x^2-y^2), \ (a>0).\]
			Therefore $f_a$ is not a convex function via Remark \ref{rem20.04.2021.1}.

		\end{enumerate}
		\end{example}
	
		\begin{example}

Finally, the product $f_ag_a=(d_{(-a,0)}^2d_{(a,0)}^2-a^4)(d_{(0,-a)}^2d_{(0,a)}^2-a^4)$ ($a>0$) is not convex, as its critical set is 
		 \[
		 C(f_ag_a)=\left\{\left(\pm a\sqrt[4]{2},0\right),(0,0),\left(0,\pm a\sqrt[4]{2}\right)\right\}.
		 \]
		 Indeed the gradients of $f_a$ and $g_a$ are 
		 \[
		 \nabla_{(x,y)}f_a=4(x^2+y^2)(x,y)-4a^2(x,-y), \ \nabla_{(x,y)}g_a=4(x^2+y^2)(x,y)+4a^2(x,-y)
		 \]
		 and therefore
		 \begin{align}\label{eq11.01.2022.1}
		 (x,y)\in C(f_ag_a) & \Longleftrightarrow f_a(x,y)\nabla_{(x,y)}g_a+g_a(x,y)\nabla_{(x,y)}f_a=0\\
		 & \Longleftrightarrow 4(x^2+y^2)(f_a(x,y)+g_a(x,y))(x,y)+(4a^2f_a(x,y)-4a^2g_a(x,y))(x,-y)=(0,0).\nonumber
		 \end{align}
		 If $x\neq 0$ and $y\neq 0$, then the vectors $(x,y)$ and $(x,-y)$ are linearly independent and the equation \eqref{eq11.01.2022.1}
		 is equivalent with 
		 \[
		 \left\{
		 \begin{array}{ll}
		 4(x^2+y^2)(f_a(x,y)+g_a(x,y))=0\\
		 4a^2f_a(x,y)-4a^2g_a(x,y)=0 
		 \end{array}		 
		 \right.\Longleftrightarrow x=0\mbox{ and }y=0.
		 \] On the other hand, for $y=0$, the equation \eqref{eq11.01.2022.1} is equivalent with  
		\begin{align}\label{eq11.01.2022.2}
		4x^2(f_a &(x,0)+g_a(x,0))x+(4a^2f_a(x,0)-4a^2g_a(x,0))x=0\nonumber\\
		& \Longleftrightarrow x=0 \mbox{ or }2x^6-4a^4x^2=0\nonumber\\
		& \Longleftrightarrow x=0 \mbox{ or  }x=\pm\sqrt[4]{2}a.\nonumber
		\end{align}
		Similary, for $x=0$  the equation \eqref{eq11.01.2022.1} is equivalent with $y=0$ or $y=\pm \sqrt[4]{2}a.$
		In particular $C(f_ag_a)=$ and 
		$B(f_ag_a)=\left\{-4a^8,0\right\}$.
		\end{example}
		
	Throughout the paper we make use of the notation described below (see also \cite{Pintea-Trif}).
	Let $D$ be a nonempty open convex subset of $\mathbb{R}^n$, and let
	$f:D\to\mathbb{R}$ be a $C^2$-smooth function. The Hessian matrix of $f$ at an arbitrary point $x\in D$
	will be denoted by $H_x(f)$. Recall that $H_x(f)$ is a symmetric matrix and it defines a symmetric bilinear functional 
	\[
	{\mathcal H}_f(x):\mathbb{R}^n\times\mathbb{R}^n\longrightarrow\mathbb{R}, \ {\mathcal H}_f(x)(u,v):=u\cdot H_x(f)\cdot v^T.
	\]
	Denote by $h_f(x):\mathbb{R}^n\longrightarrow\mathbb{R}^n$ the linear transformation defined by the following equality 
	\[
	{\mathcal H}_f(x)(u,v):=\left\langle h_f(x)u,v\right\rangle, \ \forall u,v\in \mathbb{R}^n.
	\]
	In fact $h_f(x)u=u\cdot H_x(f)$. Note that the operator $h_f(x)$ is symmetric, i.e. 
	$\left\langle h_f(x)u,v\right\rangle=\left\langle u,h_f(x)v\right\rangle$,  for all $u,v\in \mathbb{R}^n$.

	Further, let $A:\mathbb{R}^n\to\mathbb{R}^n$ be a linear operator.,
	We shall denote by $[A]$ the matrix representation of $A$ with respect to the standard basis of $\mathbb{R}^n$.
	Let $S^{n-1}$ denote the unit sphere (i.e., centered at the origin) in $\mathbb{R}^n$, and let
	\begin{equation*}
		W(A):=\{ \langle Ax, x\rangle \mid x\in S^{n-1}\}
	\end{equation*}
	be the {\em numerical range}\/ of $A$. It is well known that
	$W(A)=[\lambda(A),\mu(A)]$, where $\lambda(A)$ and $\mu(A)$ denote the smallest and the greatest eigenvalue, respectively, of the symmetric operator $(A+A^*)/2$. Consequently $$\lambda(A)=\min_{x\in S^{n-1}}\langle Ax, x\rangle\mbox{ and }
	\mu(A)=\max_{x\in S^{n-1}}\langle Ax, x\rangle.$$
	In particular \[\lambda(H_p(f)):=\lambda(h_f(p))=\min_{u\in S^{n-1}}u\cdot H_p(f)\cdot u^T\]
	and 
	\[
	\mu(H_p(f)):=\mu(h_f(p))=\max_{u\in S^{n-1}}u\cdot H_p(f)\cdot u^T.
	\]
	Moreover, $H_p(f)$ is positive definite if and only if $\lambda(H_p(f))>0$. We are interested about the region 
	\[
	{\rm Hess}^+(f)=\{x\in D : H_x(f)\mbox{ is positive definite}\}.
	\]
	Note that for $n=2$ we have ${\rm Hess}^+(f)=\{x\in D : {\rm Tr}(H_x(f)),\ \det(H_x(f))>0\}$. Let us also recall from \cite{Pintea-Tofan} that 
	${\rm Hess}^+(f_a)=\{(x,y)\in\mathbb{R}^2 : 3(x^2+y^2)^2+2a^2(x^2-y^2)>a^4\}$.

	\begin{figure}[ht]
		\includegraphics[width=10cm]{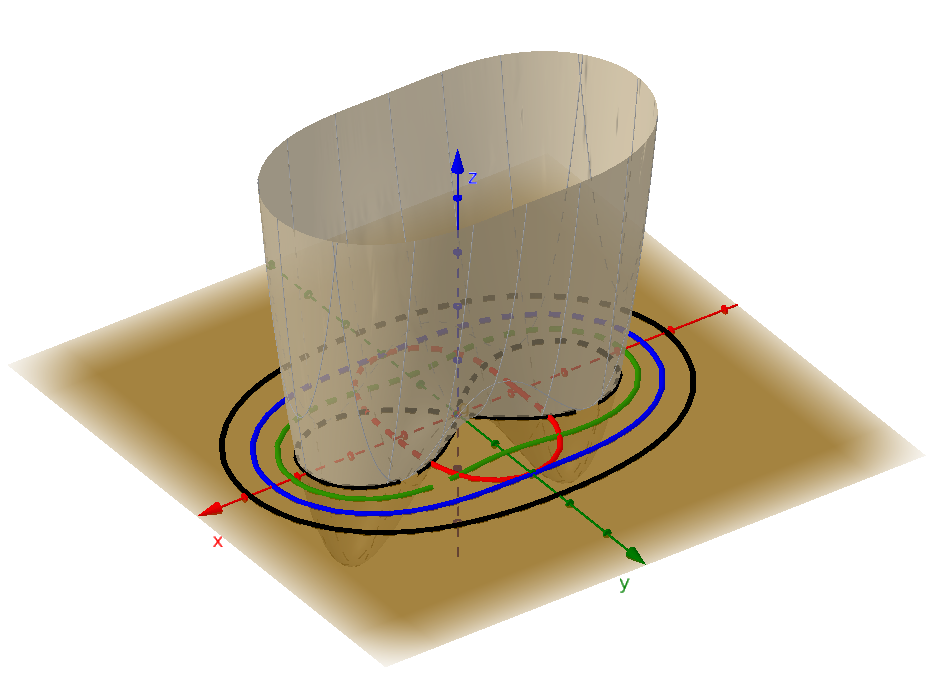}
		\caption{A piece of graph of $f_a$. \textcolor{red}{The curve $\partial{\rm Hess}^+(f_a)$}, the zero level of $f_a$-the Bernoulli's lemniscate, \textcolor{green}{a nonconvex regular level of $f_a$},
			\textcolor{blue}{the first positive convex regular level of $f_a$} and a regular convex level $f_a$ completely contained in ${\rm Hess}^+(f_a)$.}
		\label{Hess^+(f_a)}
	\end{figure}
	
		\noindent Once we have some boundedness information on the complements $\mathbb{R}^2\setminus {\rm Hess}^+(f)$ and $\mathbb{R}^2\setminus {\rm Hess}^+(g)$, where $f,g$ are two real valued functions, we will be looking for  boundedness information on the complement $\mathbb{R}^2\setminus {\rm Hess}^+(fg)$. In this respect we first recall the formulas for the gradient and the Hessian matrix of the product $fg$. In order to justify the formula for the Hessian matrix of the product we first recall the formula for the Jacobian matrix of 
	the exterior product $fF$, where $F$ is a vector-valued function. 
	\begin{align}
		& d_p(fF)(q) =f(p)(d_pF)(q)+(d_pf)(q)F(p)\nonumber\\
		& \Leftrightarrow J_p(fF)q^T =f(p)(J_pF)q^T+\langle \nabla_pf,q\rangle F(p)^T\nonumber\\
		& \Leftrightarrow J_p(fF)q^T =f(p)(J_pF)q^T+[F(p)^T\nabla_p f]q^T.\nonumber
	\end{align}
	\noindent Thus $J_p(fF)=f(p)(JF)_p+F(p)^T\nabla_p f$.
	\begin{align}
		H_p(fg) & =J_p(\nabla fg)=J_p(f\nabla g+g\nabla f)  \nonumber\\
		& =J_p(f\nabla g)+J_p(g\nabla f) \nonumber\\
		& =f(p)(J_p\nabla g+(\nabla_pg)^T\nabla_pf)
		+g(p)(J_p\nabla f)+(\nabla_pf)^T\nabla_pg\nonumber\\
		& = f(p)H_p(g)+g(p)H_p(f)+(\nabla_pf)^T\nabla_pg+(\nabla_pg)^T\nabla_pf.\label{Hessian-of-product}
	\end{align}

	Some other useful formulas are those for the gradient and the Hessian matrix of the composed function $\varphi\circ f$, where 
	$\varphi:\mathbb{R}\longrightarrow\mathbb{R}$ is a $C^2$-smooth function. One can easily check that 
	\begin{align}
		& \nabla(\varphi\circ f)=(\varphi'\circ f)\nabla f, \ \mbox{ i.e. }d(\varphi\circ f)=(\varphi'\circ f)df\label{eq10.04.2021.5}\\
		& H(\varphi\circ f)= (\varphi'\circ f)H(f)+(\varphi''\circ f)(\nabla f)^T\nabla f.\label{eq10.04.2021.2}
	\end{align}
	\begin{remark}
		 If $f>0$, then ${\rm 
        	 Hess}^+(f)\subseteq{\rm Hess}^+(f^2),$ 
        	 
        	 Indeed, if $H_p(f)$ is positive definite, then according to formula \eqref{Hessian-of-product}, the matrix \[H_p(f^2)=2f(p)H_p(f)+2(\nabla_p f)^T \nabla_p f\] is positive definite, as the sum of positive definite matrix $2f(p)H)p(f)$ and positive semidefinite matrix $2(\nabla_p f)^T\nabla_p f,$ with $\lambda(2(\nabla_p f)^T\nabla_p f)=0.$ 
            
            More generally, we can extend this result for other exponents, for     arbitrary $n,$
            Let $f>0$ such that $H_p(f)$ is positive definite everywhere.
            Then $H_p(f^2)$ is also positive everywhere and since, by formula \eqref{eq10.04.2021.5}, $\nabla_p f^{m+1}=(m+1) f^m(p) \nabla_p f$ and $$H_p(f^{m+1})=f(p) H_p(f^m)+f^m(p) H_p(f)+2mf^{m-1}(p)(\nabla_p f)^T\nabla_p f, $$
            we can deduce inductively that $H_p(f^{m+1})$ is positive definite if $H_p(f^m)$ is positive definite, as $(\nabla_p f)^T\nabla_p f$ is positive semidefinite (being symmetric and having nonnegative eigenvalues).
            Thus, the following chain of inclusions becomes obvious for $f>0:$
            $${\rm Hess^+}(f)\subseteq {\rm Hess^+}(f^2)\subseteq\cdots \subseteq {\rm Hess^+}(f^n)\subseteq \cdots.$$
	\end{remark}	
Lastly, for a pair of real-valued functions $f,g:\mathbb{R}^n \longrightarrow \mathbb{R},$ we define the function \[f\oplus g:\mathbb{R}^n\longrightarrow \mathbb{R}^2, (f\oplus g)(x)=(f(x),g(x)),\] which will prove very fruitful, as we'll see shortly.
Note that $f\oplus g$ is differentiable whenever $f$ and $g$ are differentiable and $d_p(f\oplus g)=d_pf\oplus d_pg$ 
	for every $p\in\mathbb{R}^n$.  Note that $f\oplus g$ and  $g\oplus f$ are usually not equal. In fact  $f\oplus g=r\circ(g\oplus f)$, where $r_b:\mathbb{R}^2\longrightarrow\mathbb{R}^2$ stands for the reflection about the first bisector $b: x=y$.

	\begin{remark}\label{rem11.04.2021.1}
		Let $f,g:\mathbb{R}^n\longrightarrow\mathbb{R}$ be $C^2$-smooth functions.
		\begin{enumerate}
			\item If $f,g\geq 0$, then
			\[\lambda(H_p(fg))\geq f(p)\lambda(H_p(g))+g(p)\lambda(H_p(f))+\lambda((\nabla_pf)^T\nabla_pg+(\nabla_pg)^T\nabla_pf).\]
			Indeed, by using the obvious fact that $\langle Au,u\rangle=uAu^T\geq\lambda(A)\|u\|^2$ for all vectors $u$ and the formula \eqref{Hessian-of-product} 
			for the Hessian of the product $fg$ we observe that
			\begin{align}
				& uH_p(fg)u^T =u[f(p)H_p(g)+g(p)H_p(f)+(\nabla_pf)^T\nabla_pg+(\nabla_pg)^T\nabla_pf]u^T\nonumber\\
				& =f(p)uH_p(g)u^T+g(p)uH_p(f)u^T+u[(\nabla_pf)^T\nabla_pg+(\nabla_pg)^T\nabla_pf]u^T\nonumber\\
				& \geq [f(p)\lambda(H_p(g))+g(p)\lambda(H_p(f))+\lambda((\nabla_pf)^T\nabla_pg+(\nabla_pg)^T\nabla_pf)]\|u\|^2.\nonumber
			\end{align}
			Thus 
			\[uH_p(fg)u^T\geq f(p)\lambda(H_p(f))+g(p)\lambda(H_p(g))+\lambda((\nabla_pf)^T\nabla_pg+(\nabla_pg)^T\nabla_pf), \ 
			\] 
			for all $u\in S^{n-1}$, which shows that 
			\begin{align}
				\lambda(H_p(fg)) & =\min_{u\in S^{n-1}}uH_p(fg)u^T\nonumber
				& \geq f(p)\lambda(H_p(f))+g(p)\lambda(H_p(g))+\lambda((\nabla_pf)^T\nabla_pg+(\nabla_pg)^T\nabla_pf).\nonumber 
			\end{align}
			Therefore, the inequality
			\[
			f(p)\lambda(H_p(g))+g(p)\lambda(H_p(f))+\lambda((\nabla_pf)^T\nabla_pg+(\nabla_pg)^T\nabla_pf)>0 
			\]
			implies that $H_p(fg)$ is positive definite.
			\item $\lambda(H(\varphi\circ f))\geq (\varphi'\circ f)\lambda(H(f))$ whenever $\varphi:\mathbb{R}\longrightarrow\mathbb{R}$ is a convex increasing $C^2$-smooth function. 
			Indeed, by using the formula \eqref{eq10.04.2021.2} for the Hessian matrix of the composed function $\varphi\circ f$ we observe that
			\begin{align}
				\!\!\!\!\!\!\!\!\! & uH_p(\varphi\circ f)u^T =u[(\varphi'\circ f)(p)H_p(f)+(\varphi''\circ f)(p)(\nabla_pf)^T\nabla_pf]u^T\nonumber\\
				& =(\varphi'\circ f)(p)uH_p(f)u^T+(\varphi''\circ f)(p)u[(\nabla_pf)^T\nabla_pf]u^T\nonumber\\
				& \geq [(\varphi'\circ f)(p)\lambda(H_p(f))+(\varphi''\circ f)(p)\lambda((\nabla_pf)^T\nabla_pf)]\|u\|^2.\label{eq10.04.2021.1}
			\end{align}
			Note that $\lambda((\nabla_pf)^T\nabla_pf))=0$ and $\mu[(\nabla_pf)^T\nabla_pf)]=\|\nabla f\|^2$, as  the characteristic polynomial of $(\nabla_pf)^T\nabla_pf$ is 
			\[X^n-\|\nabla f\|^2X^{n-1}.\] Since the smallest eigenvalue of $(\nabla_pf)^T\nabla_pf$ is zero i.e. the symmetric matrix $(\nabla_pf)^T\nabla_pf$ is positive semi-definite, this implies that
			\[
			uH_p(\varphi\circ f)u^T\geq  (\varphi'\circ f)(p)\lambda(H_p(f)),
			\] 
			for all $u\in S^{n-1}$, which shows that 
			\begin{align}
				\lambda(H_p(\varphi\circ f)) & =\min_{p\in S^{n-1}}u_p H(\varphi\circ f)u^T \geq (\varphi'\circ f)(p)\lambda(H_p(f)).\nonumber 
			\end{align}
			Therefore, the inequality $(\varphi'\circ f)(p)\lambda[H_p(f)]>0$
			implies that $H_p(\varphi\circ f)$ is positive definite. This is the case when $\varphi'>0$ and $H_p(f)$ is positive definite.
			
		    		\item For two $C^1$-smooth functions $\varphi, \ \psi:\mathbb{R}\longrightarrow\mathbb{R},$ taking into account the formulas \eqref{eq10.04.2021.5}, one obtains that 
			\begin{equation}\label{eq16.04.2021.1}
				{\rm rank}[d(\varphi\circ f)\oplus d(\psi\circ f)]\le 1, \mbox{ i.e. }C[(\varphi\circ f)\oplus (\psi\circ f)]=\mathbb{R}^2.
			\end{equation}
			Indeed, ${\rm rank}[(\varphi'\circ f)df\oplus (\psi'\circ f)df]\le {\rm rank}(df\oplus df)\le 1$. 
			
			In particular, when $n=2$ we can see that $	\det[d(\varphi\circ f)\oplus d(\psi\circ f)]=0.$
			
			Indeed, $\det[(\varphi'\circ f)df\oplus (\psi'\circ f)df]=(\varphi'\circ f)(\psi'\circ f)\det(df\oplus df)=0$.
			\item Since $(\nabla_p f)^T \nabla_p g$ and $(\nabla_p g)^T \nabla_p f$ are matrices of rank $1,$ the symmetric matrix $(\nabla_pf)^T\nabla_pg+
				(\nabla_pg)^T\nabla_pf $ has a rank of at most $2.$ Furthermore, since \[{\rm Tr}((\nabla f)^T\nabla g+
				(\nabla g)^T\nabla f) =2\left<\nabla f, \nabla g\right>\] \[M:=\sigma_2((\nabla f)^T\nabla g+
				(\nabla g^T\nabla f) =-\sum_{1\le i<j\le n} (f_{x_i}g_{x_j}-f_{x_j}g_{x_i})^2, \]
			
			by solving the characteristic equation $X^n-2\left<\nabla f, \nabla g\right>X^{n-1}+M X^{n-2}=0,$ one finds that 
			\begin{align}
				\lambda_{1,2}
				& =\langle \nabla f,\nabla g\rangle\pm
				\sqrt{\langle\nabla f,\nabla g\rangle^2+
					\sum_{1\le i<j\le n} (f_{x_i}g_{x_j}-f_{x_j}g_{x_i})^2}\nonumber\\
				& =\langle \nabla f,\nabla g\rangle \pm\|\nabla f\|\cdot\|\nabla g\|,  \nonumber
			\end{align} are the only potentially nonzero eigenvalues of $(\nabla f)^T\nabla g+(\nabla g)^T\nabla f $, by Lagrange's Identity.
			
			Evidently, $\lambda((\nabla_pf)^T\nabla_pg+(\nabla_pg)^T\nabla_pf)=\left<\nabla_p f,\nabla_p g\right>-\|\nabla_p f\|\cdot \|\nabla_p g\|\le 0$ and $\mu((\nabla_pf)^T\nabla_pg+(\nabla_pg)^T\nabla_pf)=\left<\nabla_p f,\nabla_p g\right>+\|\nabla_p f\|\cdot \|\nabla_p g\|\ge 0,$ by the Cauchy-Schwartz inequality. 
			
			Note that $\lambda((\nabla_pf)^T\nabla_pg+(\nabla_pg)^T\nabla_pf)=0$ implies that rank $d_p(f\oplus g)\le 1$, i.e. $p\in C(f\oplus g)$.
			Conversely, if $p\in C(f\oplus g)$ and $\langle\nabla_pf,\nabla_pg\rangle\geq 0$, then
			\[\lambda((\nabla_pf)^T\nabla_pg+(\nabla_pg)^T\nabla_pf)=0.\] However \[
			\lambda((\nabla_pf)^T\nabla_pg+(\nabla_pg)^T\nabla_pf)=2\langle\nabla_pf,\nabla_pg\rangle
			\]
			whenever $p\in C(f\oplus g)$ and $\langle\nabla_pf,\nabla_pg\rangle\leq 0$.
			Thus \[\lambda((\nabla_pf)^T\nabla_pg+(\nabla_pg)^T\nabla_pf)=0\] whenever $p\in C(f)\cup C(g)$ as 
			\[\left<\nabla_pf,\nabla_pg\right>=\|\nabla_pf\|\cdot \|\nabla_pg\|=0\] for such $p$.

		\end{enumerate}
	\end{remark}
	\section{The main results}
	In this section we first provide a source of examples of polynomial functions with large ${\rm Hess}^+$ region, i.e. the complement of this region is compact. We provide some sufficient conditions on two functions $f,g:\mathbb{R}^2\longrightarrow\mathbb{R}$ with  bounded ${\rm Hess}^+$ complements whose product $fg$ keeps having  bounded ${\rm Hess}^+$ complement as well.
	\begin{theorem}\label{th05.03.2021.1}
		If $f:\mathbb{R}^2\longrightarrow\mathbb{R}$, $f(x,y)=P(x^2+y^2)+p(x,y)$, where $P(z)\in\mathbb{R}[z]$ is a 
		polynomial function with non-negative coefficients and $p\in\mathbb{R}[x,y]$ is such that $2\deg(P)>\deg(p)\geq 2$, 
		then $\mathbb{R}^2\setminus{\rm Hess}^+(f)$ is bounded.
	\end{theorem}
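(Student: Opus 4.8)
The plan is to exhibit $R>0$ such that $\lambda(H_x(f))>0$ for every $x\in\mathbb{R}^2$ with $\|x\|>R$; since, as recalled above, $H_x(f)$ is positive definite precisely when $\lambda(H_x(f))>0$, this yields $\mathbb{R}^2\setminus{\rm Hess}^+(f)\subseteq\{x:\|x\|\le R\}$, which is the assertion. Write $N:=\deg(P)$, so that $N\ge 2$ (because $2N>\deg(p)\ge 2$) and the leading coefficient $a_N$ of $P$ is positive; set $q(x,y):=x^2+y^2$, $Q:=P\circ q$, and $r:=\|x\|$, so that $q(x)=r^2$.

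First I would compute the Hessian of the radial part. Since $H_x(q)=2I_2$ and $\nabla_x q=(2x,2y)$, formula \eqref{eq10.04.2021.2} gives
\[
H_x(Q)=2P'(r^2)\,I_2+P''(r^2)\,(\nabla_x q)^T\nabla_x q .
\]
The symmetric matrix $(\nabla_x q)^T\nabla_x q$ is positive semi-definite with eigenvalues $0$ and $\|\nabla_x q\|^2=4r^2$ (its characteristic polynomial being $X^2-\|\nabla_x q\|^2X$, as in Remark \ref{rem11.04.2021.1}), and because $P$ has non-negative coefficients we have $P'(t)\ge 0$ and $P''(t)\ge 0$ for all $t\ge 0$. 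Hence $H_x(Q)$ is positive semi-definite with the two eigenvalues $2P'(r^2)$ and $2P'(r^2)+4r^2P''(r^2)$, the smaller being $2P'(r^2)$; moreover $P'(r^2)=\sum_{k=1}^{N} k a_k\,r^{2(k-1)}\ge N a_N\,r^{2N-2}$, all summands being non-negative. Therefore
\[
\lambda(H_x(Q))=2P'(r^2)\ge 2N a_N\, r^{2N-2}.
\]

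Next I would estimate the perturbation $H_x(p)$. Each entry of $H_x(p)$ is a polynomial of degree at most $\deg(p)-2$, so there is a constant $C>0$ with $\|H_x(p)\|\le C\,(1+r^{\deg(p)-2})$ for every $x$ (operator norm), whence $\lambda(H_x(p))\ge -C\,(1+r^{\deg(p)-2})$. Arguing exactly as in Remark \ref{rem11.04.2021.1}(1) — for a unit vector $u$, $uH_x(f)u^T=uH_x(Q)u^T+uH_x(p)u^T\ge\lambda(H_x(Q))+\lambda(H_x(p))$ — we obtain
\[
\lambda(H_x(f))\ \ge\ \lambda(H_x(Q))+\lambda(H_x(p))\ \ge\ 2N a_N\, r^{2N-2}-C\,(1+r^{\deg(p)-2}).
\]
Since $2N-2>\deg(p)-2\ge 0$, the right-hand side, viewed as a function of $r$, tends to $+\infty$; choosing $R>0$ for which it is strictly positive whenever $r>R$ completes the argument.

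The only substantive point is the one in the second paragraph: the hypothesis that $P$ has non-negative coefficients is exactly what forces $H_x(Q)$ to be positive semi-definite with smallest eigenvalue the explicit quantity $2P'(\|x\|^2)$, which grows like $\|x\|^{2\deg(P)-2}$ and hence dominates $H_x(p)$ by virtue of the degree condition $2\deg(P)>\deg(p)$. Attempting the same estimate through ${\rm Tr}\,H_x(f)$ and $\det H_x(f)$ would require tracking cross terms and possible cancellations; passing through the smallest eigenvalue together with the Weyl-type bound $\lambda(A+B)\ge\lambda(A)+\lambda(B)$ avoids this entirely.
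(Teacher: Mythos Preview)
Your argument is correct and takes a genuinely different route from the paper. The paper works entirely through the $2\times 2$ criterion ${\rm Tr}\,H_x(f)>0$ and $\det H_x(f)>0$: it writes the Hessian of $f$ explicitly, expands the Laplacian and the determinant (the latter producing five cross terms), and shows each quantity tends to $+\infty$ by factoring out $(x^2+y^2)^{n-1}$ and $(x^2+y^2)^{2n-2}$ respectively and identifying the leading constants $4n^2a_0$ and $4n^2(2n-1)a_0^2$.

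By contrast, you split $H_x(f)=H_x(Q)+H_x(p)$, diagonalise $H_x(Q)$ exactly (its eigenvectors being the radial and tangential directions), and invoke the subadditivity bound $\lambda(A+B)\ge\lambda(A)+\lambda(B)$ to absorb $H_x(p)$ as a lower-order perturbation. This is cleaner --- it sidesteps the determinant expansion entirely --- and, as you may have noticed, it is dimension-free: the same computation with $q(x)=\|x\|^2$ and $H_x(q)=2I_n$ proves the analogous statement in $\mathbb{R}^n$. The paper's proof, tied to the trace/determinant characterisation, does not generalise as directly. On the other hand, the paper extracts the precise leading asymptotics of ${\rm Tr}\,H_x(f)$ and $\det H_x(f)$, which your soft estimate does not.
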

	\begin{proof}
		The Hessian matrix $H_{(x,y)}(f)$ of $f$ is 
		\[
		\left(
		\begin{array}{cc}
		2P'(x^2+y^2)+4x^2P''(x^2+y^2)+p_{xx} & 4xyP''(x^2+y^2)+p_{xy}\\
		4xyP''(x^2+y^2)+p_{xy} & 2P'(x^2+y^2)+4y^2P''(x^2+y^2)+p_{yy} 
		\end{array}
		\right)
		\]

 and it is positive definite for $\|(x,y)\|$ sufficiently large, 

seeing as 
		\[
		\lim_{\|(x,y)\|\rightarrow+\infty}\Delta f=+\infty\mbox{ and }\lim_{\|(x,y)\|\rightarrow+\infty}\det(H_{(x,y)}(f))=+\infty.
		\]
		Indeed,
		\begin{align}
			& \lim_{\|(x,y)\|\rightarrow+\infty}\Delta f = \lim_{\|(x,y)\|\rightarrow+\infty}(4P'(x^2+y^2)+4(x^2+y^2)P''(x^2+y^2)+\Delta p)\nonumber\\
			& = \lim_{\|(x,y)\|\rightarrow+\infty}(x^2+y^2)^{n-1}\left(4\cfrac{P'(x^2+y^2)}{(x^2+y^2)^{n-1}}+
			4\cfrac{P''(x^2+y^2)}{(x^2+y^2)^{n-2}}+\cfrac{\Delta p}{(x^2+y^2)^{n-1}}\right)\nonumber\\
			& =+\infty,\nonumber
		\end{align}
		as
		\[
		\lim_{\|(x,y)\|\rightarrow+\infty}
		\left(4\cfrac{P'(x^2+y^2)}{(x^2+y^2)^{n-1}}+
		4\cfrac{P''(x^2+y^2)}{(x^2+y^2)^{n-2}}+\cfrac{\Delta p}{(x^2+y^2)^{n-1}}\right)=4n^2a_0>0,
		\]
		where $n=\deg(P)$ and $a_0$ is the coefficient of $z^n$ in $P(z)$, respectively.
		\begin{align}
			& \lim_{\|(x,y)\|\rightarrow+\infty}\det(H_{(x,y)}(f))\nonumber\\
			& = \lim_{\|(x,y)\|\rightarrow+\infty}\Big(4(P'(x^2+y^2))^2+8(x^2+y^2)P'(x^2+y^2)P''(x^2+y^2)\nonumber\\
			& + 2(\Delta p)P'(x^2+y^2)+\det H_{(x,y)}(p)+4(x^2p_{yy}-2xyp_{xy}+y^2p_{xx})P''(x^2+y^2)\Big)\nonumber\\
			& = \lim_{\|(x,y)\|\rightarrow+\infty}(x^2+y^2)^{2n-2}\Bigg(4\left(\cfrac{P'(x^2+y^2)}{(x^2+y^2)^{n-1}}\right)^2+
			8\cfrac{P'(x^2+y^2)P''(x^2+y^2)}{(x^2+y^2)^{2n-3}}\nonumber\\
			& + 2\cfrac{(\Delta p)P'(x^2+y^2)}{(x^2+y^2)^{2n-2}}+
			\cfrac{\det H_{(x,y)}(p)}{(x^2+y^2)^{2n-2}} +4\cfrac{x^2p_{yy}-2xyp_{xy}+
				y^2p_{xx}}{(x^2+y^2)^n}\cfrac{P''(x^2+y^2)}{(x^2+y^2)^{n-2}}\Bigg)\nonumber\\
			& =+\infty,\nonumber
		\end{align}
		as
		\begin{align}
			& \lim_{\|(x,y)\|\rightarrow+\infty}\Bigg(4\left(\cfrac{P'(x^2+y^2)}{(x^2+y^2)^{n-1}}
			\right)^2+
			8\cfrac{P'(x^2+y^2)P''(x^2+y^2)}{(x^2+y^2)^{2n-3}}\nonumber\\
			& + 2\cfrac{(\Delta p)P'(x^2+y^2)}{(x^2+y^2)^{2n-2}}+
			\cfrac{\det H_{(x,y)}(p)}{(x^2+y^2)^{2n-2}} +4\cfrac{x^2p_{yy}-2xyp_{xy}+
				y^2p_{xx}}{(x^2+y^2)^n}\cfrac{P''(x^2+y^2)}{(x^2+y^2)^{n-2}}\Bigg)\nonumber\\
			& =4n^2a_0^2+8n^2(n-1)a_0^2=4n^2(2n-1)a_0^2>0,\nonumber
		\end{align}
		and the proof is now complete.
	\end{proof}
	\begin{corollary}\label{cor17.04.2021.1}
		If $f,g:\mathbb{R}^2\longrightarrow\mathbb{R}$, are defined by $f(x,y)=P(x^2+y^2)+p(x,y)$ and  $g(x,y)=Q(x^2+y^2)+q(x,y)$,
		where $P(z),Q(z)\in\mathbb{R}[z]$ are polynomial functions with non-negative coefficients and $p,q\in\mathbb{R}[x,y]$
		are such that $2\deg(P)>\deg(p)\geq 2$ and  $2\deg(Q)>\deg(q)\geq 2$,
		then $\mathbb{R}^2\setminus{\rm Hess}^+(fg)$ is bounded.
	\end{corollary}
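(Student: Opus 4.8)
The plan is to notice that the product $fg$ already has the structural shape handled by Theorem \ref{th05.03.2021.1}, so the corollary follows by a direct application of that theorem together with a short count of degrees; no genuinely new idea is needed.

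First I would expand the product:
\[
fg=\bigl(P(x^2+y^2)+p\bigr)\bigl(Q(x^2+y^2)+q\bigr)=P(x^2+y^2)Q(x^2+y^2)+\widetilde p=(PQ)(x^2+y^2)+\widetilde p ,
\]
where $\widetilde p:=P(x^2+y^2)\,q+Q(x^2+y^2)\,p+pq\in\mathbb R[x,y]$ and $PQ\in\mathbb R[z]$. Then I would check the hypotheses of Theorem \ref{th05.03.2021.1} for the pair $(PQ,\widetilde p)$: the polynomial $PQ$, being a product of polynomials with non-negative coefficients, again has non-negative coefficients, and $\deg(PQ)=\deg P+\deg Q$; while, using $\deg p<2\deg P$ and $\deg q<2\deg Q$,
\[
\deg\bigl(P(x^2+y^2)\,q\bigr)=2\deg P+\deg q<2\deg P+2\deg Q,\qquad \deg\bigl(Q(x^2+y^2)\,p\bigr)<2\deg P+2\deg Q ,
\]
and $\deg(pq)=\deg p+\deg q<2\deg P+2\deg Q$, whence $\deg\widetilde p<2\bigl(\deg P+\deg Q\bigr)=2\deg(PQ)$. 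Thus $fg$ is exactly of the form $(PQ)(x^2+y^2)+\widetilde p$ treated in Theorem \ref{th05.03.2021.1}, so $\mathbb R^2\setminus{\rm Hess}^+(fg)$ is bounded.

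The one point deserving a word of care is the hypothesis $\deg\widetilde p\ge 2$ that appears in the statement of Theorem \ref{th05.03.2021.1}: it is never actually used in the proof of that theorem, which only needs $P$ to have non-negative (hence positive leading) coefficients and $\deg\widetilde p<2\deg(PQ)$, so that all the normalized remainders $\Delta\widetilde p/(x^2+y^2)^{N-1}$, $\det H(\widetilde p)/(x^2+y^2)^{2N-2}$, $\ldots$ (with $N=\deg(PQ)$) vanish at infinity while the radial part forces $\Delta(fg)\to+\infty$ and $\det H_{(x,y)}(fg)\to+\infty$; hence that argument applies verbatim. If one prefers to stay inside the literal statement, in the exceptional case that $\widetilde p$ is affine one simply has $H_{(x,y)}(fg)=H_{(x,y)}\bigl((PQ)(x^2+y^2)\bigr)$, whose trace $4(PQ)'+4(x^2+y^2)(PQ)''$ and determinant $4\bigl((PQ)'\bigr)^2+8(x^2+y^2)(PQ)'(PQ)''$ (all evaluated at $x^2+y^2$) are non-negative and tend to $+\infty$, so the complement of ${\rm Hess}^+(fg)$ is bounded in that case too.

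An alternative, more self-contained route would instead use Remark \ref{rem11.04.2021.1}: Theorem \ref{th05.03.2021.1} applied to $f$ and to $g$ separately, together with $f,g\to+\infty$, gives an $R>0$ such that $f,g>0$ and $H(f),H(g)$ are positive definite outside the disc of radius $R$; then Remark \ref{rem11.04.2021.1}(1) bounds $\lambda\bigl(H_{(x,y)}(fg)\bigr)$ from below by $f\lambda(H(g))+g\lambda(H(f))+\lambda\bigl((\nabla f)^T\nabla g+(\nabla g)^T\nabla f\bigr)$. Writing $r=\|(x,y)\|$, $n=\deg P$, $m=\deg Q$ and $a_0,b_0>0$ for the leading coefficients, one shows $f\lambda(H(g))+g\lambda(H(f))=2(n+m)a_0b_0\,r^{2(n+m)-2}\bigl(1+o(1)\bigr)$, using $\lambda(H_{(x,y)}(f))\sim 2na_0r^{2n-2}$ (read off from the trace/determinant asymptotics in the proof of Theorem \ref{th05.03.2021.1}), whereas the last term, by Remark \ref{rem11.04.2021.1}(4), equals $\langle\nabla f,\nabla g\rangle-\|\nabla f\|\,\|\nabla g\|=O\bigl(r^{2(n+m)-3}\bigr)$, so $\lambda\bigl(H_{(x,y)}(fg)\bigr)\to+\infty$. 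The delicate step here is precisely that last estimate: the crude bound $\lambda\bigl((\nabla f)^T\nabla g+(\nabla g)^T\nabla f\bigr)\ge -2\|\nabla f\|\,\|\nabla g\|$ is of the same order $r^{2(n+m)-2}$ as the positive terms, only with the larger constant $8nm\,a_0b_0$, hence useless; the gain of one power of $r$ comes from the fact that $\nabla_{(x,y)}f=2P'(x^2+y^2)(x,y)+\nabla p$ and $\nabla_{(x,y)}g=2Q'(x^2+y^2)(x,y)+\nabla q$ are asymptotically parallel (both essentially radial), which makes $\langle\nabla f,\nabla g\rangle$ and $\|\nabla f\|\,\|\nabla g\|$ agree to leading order. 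The first route avoids this subtlety altogether, which is why I would present it as the main argument.
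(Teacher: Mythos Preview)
Your main route is exactly the paper's own proof: expand $fg=(PQ)(x^2+y^2)+\widetilde p$, note that $PQ$ has non-negative coefficients, and count degrees to see that $2\deg(PQ)>\deg\widetilde p$, then invoke Theorem~\ref{th05.03.2021.1}. The paper handles the lower bound $\deg\widetilde p\ge 2$ in one line by writing $\max\{2\deg P+\deg q,\,2\deg Q+\deg p,\,\deg p+\deg q\}\ge 2$, tacitly ignoring possible cancellation among the three summands; your remark that the hypothesis $\deg\widetilde p\ge 2$ is in any case never used in the proof of Theorem~\ref{th05.03.2021.1} is a fair and slightly more careful treatment of the same point.

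Your alternative route via Remark~\ref{rem11.04.2021.1} is genuinely different from the paper's argument and is correct: the crucial observation that $\nabla f$ and $\nabla g$ are asymptotically radial, so that $\det\,d_{(x,y)}(f\oplus g)=2P'(xq_y-yq_x)+2Q'(yp_x-xp_y)+(p_xq_y-p_yq_x)=O\bigl(r^{2(n+m)-3}\bigr)$ and hence, via Lagrange's identity, $\langle\nabla f,\nabla g\rangle-\|\nabla f\|\,\|\nabla g\|=o\bigl(r^{2(n+m)-2}\bigr)$, is exactly what makes the eigenvalue bound go through. This approach has the virtue of isolating the geometric reason (near-collinearity of the gradients) behind the result, and is closer in spirit to Theorem~\ref{th05.03.2021.2}; the paper's proof, by contrast, trades that insight for a one-line reduction that avoids any asymptotic estimate on $\lambda$ altogether.
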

	\begin{proof}
		We only need to observe that the product is a polynomial function of the same type with $f$ and $g$. Indeed,
		\begin{align}
			fg & =[P(x^2+y^2)+p(x,y)][Q(x^2+y^2)+q(x,y)]\nonumber\\
			& =(PQ)(x^2+y^2)+P(x^2+y^2)q(x,y)+Q(x^2+y^2)p(x,y)+p(x,y)q(x,y)\nonumber
		\end{align}
		and $PQ$ have nonnegative coefficients as well as 
		\begin{align}
			2\deg(PQ) & =2\deg P+2\deg Q \nonumber\\
			& > \max\{\deg P(x^2+y^2)q(x,y), \deg Q(x^2+y^2)p(x,y),\deg p(x,y)q(x,y)\}\nonumber\\
			& =\max\{2\deg P+\deg q, 2\deg Q+\deg p,\deg p+\deg q\}\geq 2,\nonumber
		\end{align}
		and the proof of the statement is now complete.
	\end{proof}
	\begin{remark}
		If $P(z)=z^2$ and $p(x,y)=2a^2(y^2-x^2)$, we deduce that the complement $\mathbb{R}^2\setminus{\rm Hess}^+(f_a)$ is bounded, where $f_a(x,y)=(x^2+y^2)^2-2a^2(x^2-y^2)$. Also $\mathbb{R}^2\setminus{\rm Hess}^+(g_b)$ is bounded, where 
		$g_b(x,y)=(x^2+y^2)^2+2b^2(x^2-y^2)$. In fact ${\rm Hess}^+(f_a)$ is precisely described in \cite{Pintea-Tofan}. According to Corollary \ref{cor17.04.2021.1}, the set $\mathbb{R}^2\setminus
		{\rm Hess}^+(f_ag_b)$ is bounded. 
		\begin{figure}[ht]
			\includegraphics[width=10cm]{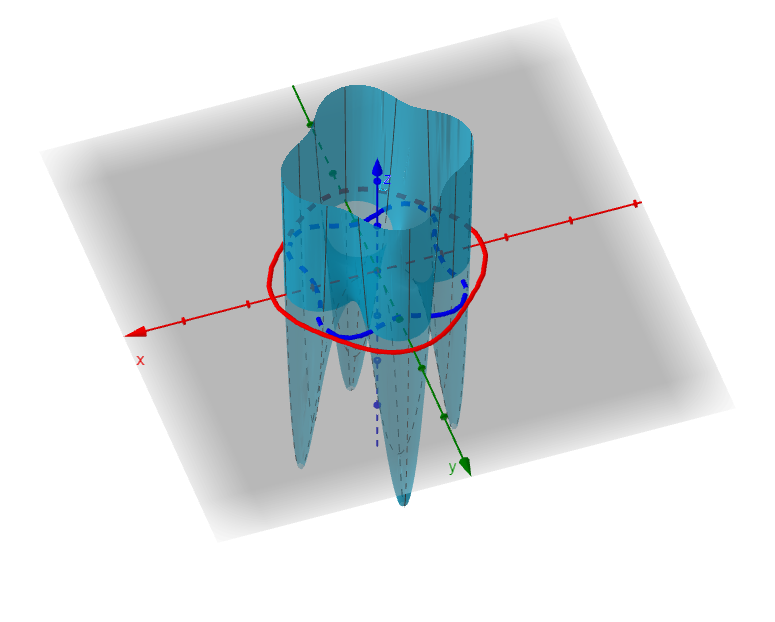}
			\caption{A piece of the graph of $f_1g_1$. \textcolor{blue}{The curve $\partial{\rm Hess}^+(f_1g_1)=h^{-1}\{0\}$}
				and \textcolor{red}{the first convex level set$(f_1g_1)^{-1}(d)$ completely contained in ${\rm Hess}^+(f_1g_1)$}}
			\label{Graph-of-a-product}
		\end{figure}
	\end{remark}
	\begin{theorem}\label{th05.03.2021.2}
	Let $f,g:\mathbb{R}^n\longrightarrow\mathbb{R}$ be two $C^2$-smooth functions such that 
		$\displaystyle\lim_{p\rightarrow\infty}f(p)=+\infty$, $\displaystyle\lim_{p\rightarrow\infty}g(p)=+\infty$ and $\langle \nabla f,\nabla g\rangle+\|\nabla f\|\cdot\|\nabla g\|>0$ almost everywhere.
		If the  sets \[R(f\oplus g), \ \mathbb{R}^n\setminus{\rm Hess}^+(f)\mbox{ and }\mathbb{R}^n\setminus{\rm Hess}^+(g)\]
		are additionally bounded, then $\mathbb{R}^n\setminus{\rm Hess}^+(fg)$ is bounded as well.
	\end{theorem}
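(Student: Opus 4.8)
The plan is to produce a radius $R>0$ such that $H_p(fg)$ is positive definite for every $p$ with $\|p\|>R$; this immediately gives $\mathbb{R}^n\setminus{\rm Hess}^+(fg)\subseteq\{p:\|p\|\le R\}$, which is bounded. Using that $\lim_{p\to\infty}f(p)=\lim_{p\to\infty}g(p)=+\infty$ together with the boundedness of the three sets $R(f\oplus g)$, $\mathbb{R}^n\setminus{\rm Hess}^+(f)$ and $\mathbb{R}^n\setminus{\rm Hess}^+(g)$, I would first fix $R>0$ so large that for all $p$ with $\|p\|>R$: (i) $f(p)>0$ and $g(p)>0$; (ii) $\lambda(H_p(f))>0$ and $\lambda(H_p(g))>0$; (iii) $p\notin R(f\oplus g)$, hence ${\rm rank}\,d_p(f\oplus g)\le 1$, i.e. $\nabla_pf$ and $\nabla_pg$ are linearly dependent.

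The step I expect to be the main obstacle is that, in order to apply Remark \ref{rem11.04.2021.1}(4), I need the \emph{pointwise} inequality $\langle\nabla_pf,\nabla_pg\rangle\ge 0$ for every $p$ with $\|p\|>R$, whereas the hypothesis only provides $\langle\nabla f,\nabla g\rangle+\|\nabla f\|\cdot\|\nabla g\|>0$ almost everywhere. I would upgrade it as follows. Suppose $\langle\nabla_pf,\nabla_pg\rangle<0$ for some $p$ with $\|p\|>R$; then $W:=\{p:\|p\|>R,\ \langle\nabla_pf,\nabla_pg\rangle<0\}$ is a non-empty open subset of $\mathbb{R}^n$ (both defining conditions are open, $f,g$ being $C^2$), hence of positive Lebesgue measure. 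On $W$ both gradients are non-zero (their inner product is negative), while by (iii) they are linearly dependent; thus they are anti-parallel, so $\|\nabla_pf\|\cdot\|\nabla_pg\|=-\langle\nabla_pf,\nabla_pg\rangle$ and therefore $\langle\nabla_pf,\nabla_pg\rangle+\|\nabla_pf\|\cdot\|\nabla_pg\|=0$ throughout $W$ — contradicting the hypothesis. Hence $W=\emptyset$, i.e. $\langle\nabla_pf,\nabla_pg\rangle\ge 0$ whenever $\|p\|>R$. It is precisely the boundedness of $R(f\oplus g)$ that makes this argument work, since it forces the putative bad open set to lie inside $C(f\oplus g)$.

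It then remains to assemble the estimates of Remark \ref{rem11.04.2021.1}. Fix $p$ with $\|p\|>R$. By (iii) and the claim just proved, $p\in C(f\oplus g)$ and $\langle\nabla_pf,\nabla_pg\rangle\ge 0$, so Remark \ref{rem11.04.2021.1}(4) gives $\lambda\big((\nabla_pf)^T\nabla_pg+(\nabla_pg)^T\nabla_pf\big)=0$. Since $f(p),g(p)\ge 0$, the estimate of Remark \ref{rem11.04.2021.1}(1) — whose derivation only uses the non-negativity of $f$ and $g$ at the point in question — yields
\[
\lambda(H_p(fg))\ \ge\ f(p)\lambda(H_p(g))+g(p)\lambda(H_p(f))+0\ >\ 0,
\]
the strict inequality coming from (i) and (ii). Thus $H_p(fg)$ is positive definite for every $p$ with $\|p\|>R$, so $\mathbb{R}^n\setminus{\rm Hess}^+(fg)$ is contained in the closed ball of radius $R$, which completes the proof.
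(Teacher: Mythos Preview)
Your proof is correct and follows essentially the same strategy as the paper's: choose a large radius so that outside the corresponding ball $f,g>0$, both Hessians are positive definite, and $\nabla f,\nabla g$ are linearly dependent; then use the almost-everywhere hypothesis to rule out anti-parallel gradients, obtain $\lambda\big((\nabla f)^T\nabla g+(\nabla g)^T\nabla f\big)=0$ via Remark~\ref{rem11.04.2021.1}(4), and conclude with Remark~\ref{rem11.04.2021.1}(1). The only cosmetic difference is that the paper introduces a continuous scalar function $\alpha$ with $\nabla f=\alpha\nabla g$ on $R(f)\cap R(g)$ and argues $\alpha>0$ from continuity, whereas you argue directly by contradiction on the open set $W=\{\|p\|>R,\ \langle\nabla_pf,\nabla_pg\rangle<0\}$; both arguments are equivalent and your formulation is arguably a little cleaner.
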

	\begin{proof} Consider a ball $B=B(0,r)$ such that 
		$R(f\oplus g)=\mathbb{R}^n\setminus C(f\oplus g), \ \mathbb{R}^n\setminus{\rm Hess}^+(f), \ \mathbb{R}^n\setminus{\rm Hess}^+(g)\subseteq B$, i.e. 
		$\mathbb{R}^n\setminus B\subseteq{\rm Hess}^+(f)\cap {\rm Hess}^+(g)\cap  C(f\oplus g)$. 
		By increasing the radius $r$ one can also assume that
		$f\big|_{\mathbb{R}^n\setminus B}, \  g\big|_{\mathbb{R}^n\setminus B}>0$ and the inclusion $\mathbb{R}^n\setminus B\subseteq C(f\oplus g)$ shows that ${\rm rank} d(f\oplus g)\le 1$ on $\mathbb{R}^n\setminus B$, i.e. the linear 
		dependence of $\nabla f$ and $\nabla g$  on $\mathbb{R}^n\setminus B$. It can be easily seen that $\mathbb{R}^n\setminus B$ is the union of the sets
		\[
		(C(f)\cup C(g))\cap (\mathbb{R}^n\setminus B)\mbox{ and } (R(f)\cap R(g))\cap (\mathbb{R}^n\setminus B).
		\]
		If $p\in  (R(f)\cap R(g))\cap (\mathbb{R}^n\setminus B)$, i.e. none of the gradients  $\nabla_pf$ or $\nabla_pg$ vanishes, but still they are linearly dependent. Their linear dependence 
		implies the existence of a function $\alpha:\mathbb{R}^n\setminus B\longrightarrow\mathbb{R}^*$ such that $\nabla f=\alpha \nabla g$
		on $\mathbb{R}^n\setminus B$. Since $\alpha$ can be expressed through at leas one of the quotients \[\cfrac{f_{x_1}}{g_{x_1}}, \ \cfrac{f_{x_2}}{g_{x_2}},\ldots,\cfrac{f_{x_n}}{g_{x_n}}\] 
		whose denominator is not zero, it follows that the restriction of $\alpha$ to $(R(f)\cap R(g))\cap (\mathbb{R}^n\setminus B)$ is continuous. Since $\langle \nabla f,\nabla g\rangle+\|\nabla f\|\cdot\|\nabla g\|>0$ almost everywhere on $(R(f)\cap R(g))\cap (\mathbb{R}^n\setminus B)$
		it follows that $\alpha>0$ almost everywhere on $(R(f)\cap R(g))\cap (\mathbb{R}^n\setminus B)$ which shows that $\langle \nabla f,\nabla g\rangle=\alpha\|\nabla g\|^2>0$ 
		almost everywhere on $(R(f)\cap R(g))\cap (\mathbb{R}^n\setminus B)$. 
	
		Therefore
		\begin{equation}\label{eq11.05.2021.1}
			\lambda((\nabla f)^T\nabla g+(\nabla g)^T\nabla f)
			=\lambda(2\alpha (\nabla g)^T \nabla g) =0
		\end{equation}
		on $\mathbb{R}^n\setminus B$,
		as the equality \eqref{eq11.05.2021.1}
		on $(C(f)\cup C(g))\cap (\mathbb{R}^n\setminus B)$
		is obvious. Therefore 
		\begin{align}
			\lambda(H(fg)) & \geq f\lambda[H(g)]+g\lambda[H(f)]+\lambda((\nabla f)^T\nabla g+(\nabla g)^T\nabla f)\nonumber\\
			& = f\lambda[H(g)]+g\lambda[H(f)]>0\mbox{ on }\mathbb{R}^n\setminus B.\nonumber
		\end{align}
		and the statement is now completely proven.
	\end{proof}
	An extreme situation for the requirement on $R(f\oplus g)$ to be bounded, in Theorem \ref{th05.03.2021.2},
	is realized when the critical set $C(f\oplus g)$ is the whole space $\mathbb{R}^n$, i.e. ${\rm rank}~d(f\oplus g)\leq 1$. 
	This is the case when $f=g$, as ${\rm rank}~d(f\oplus f)=1$ and Theorem \ref{th05.03.2021.2} provides information on the set ${\rm Hess}^+(f^2)$.
	The collection of pairs $(f,g)$ for which ${\rm rank}~d(f\oplus g)=0$ can be slightly extended from pairs of type $(f,f)$ to pairs of type 
	$(\varphi\circ f,\psi\circ f)$, where $\varphi,\psi:\mathbb{R}\longrightarrow\mathbb{R}$ are strictly convex increasing differentiable functions.
	\begin{example}
		If $f:\mathbb{R}^n\longrightarrow\mathbb{R}$ is a $C^1$-smooth function then $i_{S^1}\circ {\rm exp}\circ f:\mathbb{R}^n\longrightarrow\mathbb{R}^2$, 
		where $S^1:=\{x\in\mathbb{R}^2 \ : \ \|x\|=1\}$ is the unit circle, $i_{S^1}:S^1\hookrightarrow\mathbb{R}^2$ is the inclusion map and
		${\rm exp}:\mathbb{R}\longrightarrow S^1$, ${\rm exp}(x)=e^{ix}$ is the exponential function,
		is a $C^1$-smooth function and $C(i_{S^1}\circ {\rm exp}\circ f)=\mathbb{R}^n$, i.e.  $R(i_{S^1}\circ {\rm exp}\circ f)=\emptyset$.
	\end{example}
	\begin{proposition}\label{prop16.04.2021.1}
		If $\varphi:\mathbb{R}\longrightarrow\mathbb{R}$ is a convex increasing $C^2$-smooth function and $f:\mathbb{R}^n\longrightarrow\mathbb{R}$
		is a $C^2$-smooth function, then ${\rm Hess}_0^+(f)\subseteq {\rm Hess}_0^+(\varphi\circ f)$, where ${\rm Hess}_0^+(F)$ 
		stands for the region where $H_F$ is positive semi-definite. Moreover the following inclusion holds 
		${\rm Hess}^+(f)\subseteq {\rm Hess}^+(\varphi\circ f)$, whenever $\varphi'>0$.
	\end{proposition}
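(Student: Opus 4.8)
The plan is to read everything off the Hessian-of-composition formula \eqref{eq10.04.2021.2}, namely
\[
H_p(\varphi\circ f)=(\varphi'\circ f)(p)\,H_p(f)+(\varphi''\circ f)(p)\,(\nabla_p f)^T\nabla_p f,
\]
together with two elementary observations. First, a convex increasing $C^2$ function $\varphi$ satisfies $\varphi'\ge 0$ and $\varphi''\ge 0$ everywhere on $\mathbb{R}$. Second, for every $p$ the matrix $(\nabla_p f)^T\nabla_p f$ is positive semi-definite, since (as recalled in Remark \ref{rem11.04.2021.1}) its characteristic polynomial is $X^n-\|\nabla_p f\|^2X^{n-1}$, so its eigenvalues are $0$ and $\|\nabla_p f\|^2\ge 0$; equivalently $u(\nabla_p f)^T\nabla_p f\,u^T=\langle\nabla_p f,u\rangle^2\ge 0$ for all $u$.

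First I would prove the inclusion ${\rm Hess}_0^+(f)\subseteq{\rm Hess}_0^+(\varphi\circ f)$. Fix $p\in{\rm Hess}_0^+(f)$, so that $H_p(f)$ is positive semi-definite. Since $(\varphi'\circ f)(p)\ge 0$, the matrix $(\varphi'\circ f)(p)H_p(f)$ is positive semi-definite (a non-negative multiple of a positive semi-definite matrix), and since $(\varphi''\circ f)(p)\ge 0$, the matrix $(\varphi''\circ f)(p)(\nabla_p f)^T\nabla_p f$ is positive semi-definite as well. As a sum of two positive semi-definite matrices, $H_p(\varphi\circ f)$ is then positive semi-definite, i.e. $p\in{\rm Hess}_0^+(\varphi\circ f)$. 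Concretely, for every $u\in\mathbb{R}^n$,
\[
uH_p(\varphi\circ f)u^T=(\varphi'\circ f)(p)\,uH_p(f)u^T+(\varphi''\circ f)(p)\,\langle\nabla_p f,u\rangle^2\ge 0.
\]

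For the second assertion, assume in addition $\varphi'>0$ and fix $p\in{\rm Hess}^+(f)$, so that $H_p(f)$ is positive definite. Then $(\varphi'\circ f)(p)>0$, hence $(\varphi'\circ f)(p)H_p(f)$ is positive definite, and adding the positive semi-definite term $(\varphi''\circ f)(p)(\nabla_p f)^T\nabla_p f$ preserves positive definiteness; thus $H_p(\varphi\circ f)$ is positive definite, i.e. $p\in{\rm Hess}^+(\varphi\circ f)$. Alternatively, this follows at once from Remark \ref{rem11.04.2021.1}(2), which gives $\lambda(H_p(\varphi\circ f))\ge(\varphi'\circ f)(p)\,\lambda(H_p(f))>0$. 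No genuine obstacle is expected; the only point worth a word of care is the semi-definite case, where the scalar $(\varphi'\circ f)(p)$ may vanish, but this is harmless since the zero matrix is positive semi-definite and the second summand is positive semi-definite on its own.
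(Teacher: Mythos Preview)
Your argument is correct and follows essentially the same route as the paper: both use the composition formula \eqref{eq10.04.2021.2}, observe that $(\nabla_p f)^T\nabla_p f$ is positive semi-definite via its characteristic polynomial, and then conclude by noting that non-negative (resp.\ positive) scalar multiples and sums preserve semi-definiteness (resp.\ definiteness). Your explicit mention of the alternative via Remark~\ref{rem11.04.2021.1}(2) and of the harmless edge case $\varphi'(f(p))=0$ are minor additions but do not change the approach.
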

	\begin{proof}
		
		The characteristic polynomial of the symmetric matrix $(\nabla f)^T\nabla f$ is $X^n-\|\nabla f\|^2X^{n-1}$ 
		and its smallest eigenvalue is therefore zero. Thus, the symmetric matrix $(\nabla f)^T\nabla f$ is everywhere 
		positive semi-definite and by multiplying it with the non-negative function $\varphi''\circ f$ we get a new positive semi-definite 
		matrix. On the other hand the Hessian matrix $H(f)$ is positive semi-definite at every point of ${\rm Hess}_0^+(f)$ and  by multiplying it with
		the non-negative function $\varphi'\circ f$ we get a new matrix $(\varphi'\circ f)H(f)$ which is positive semi-definite everywhere on
		${\rm Hess}_0^+(f)$. Consequently the symmetric matrix
		$H_{\varphi\circ f}= (\varphi'\circ f)H_f+(\varphi''\circ f)(\nabla f)^T\nabla f$ is 
		positive semidefinite everywhere on ${\rm Hess}_0^+(f)$, i.e. ${\rm Hess}_0^+(f)\subseteq {\rm Hess}_0^+(\varphi\circ f)$.
		Finally, by multiplying the Hessian matrix $H(f)$ with the positive function $\varphi'\circ f$ 
		we get a new matrix $(\varphi'\circ f)H(f)$ which is positive definite everywhere on ${\rm Hess}^+(f)$, 
		whenever $\varphi'>0$. Therefore the symmetric matrix
		\[H(\varphi\circ f)= (\varphi'\circ f)H(f)+(\varphi''\circ f)(\nabla f)^T\nabla f\] is 
		positive definite everywhere on ${\rm Hess}^+(f)$, i.e. ${\rm Hess}^+(f)\subseteq {\rm Hess}^+(\varphi\circ f)$, whenever $\varphi'>0$.
	\end{proof}
\begin{proposition}\label{prop05.03.2021.2}
		Let $f:\mathbb{R}^n\longrightarrow\mathbb{R}$ be a $C^2$-smooth function with zero measure critical set and
		$\displaystyle\lim_{p\rightarrow\infty}f(p)=+\infty$. Let also $\varphi, \psi:\mathbb{R}\longrightarrow\mathbb{R}$ be 
		convex unbounded functions such that $\varphi',\psi'>0$.
		If $\mathbb{R}^n\setminus{\rm Hess}^+(f)$ is bounded, 
		then \[\mathbb{R}^n\setminus{\rm Hess}^+(\varphi\circ f)(\psi\circ f)\] is bounded as well. 
	\end{proposition}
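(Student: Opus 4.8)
The plan is to apply Theorem~\ref{th05.03.2021.2} to the pair $F:=\varphi\circ f$ and $G:=\psi\circ f$, whose product is precisely the function $(\varphi\circ f)(\psi\circ f)$ under study. Thus the task reduces entirely to checking that $F$ and $G$ (which are $C^2$-smooth, since $f$ is and, as the notation ${\rm Hess}^+(\varphi\circ f)$ tacitly presupposes, $\varphi$ and $\psi$ are) satisfy the hypotheses of that theorem: coercivity of $F$ and $G$, the gradient inequality $\langle\nabla F,\nabla G\rangle+\|\nabla F\|\cdot\|\nabla G\|>0$ almost everywhere, boundedness of $R(F\oplus G)$, and boundedness of the ${\rm Hess}^+$-complements of $F$ and of $G$.

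Coercivity is immediate: since $\varphi$ is convex with $\varphi'>0$, the supporting-line inequality $\varphi(t)\ge\varphi(0)+\varphi'(0)\,t$ shows $\varphi(t)\to+\infty$ as $t\to+\infty$, so $F(p)=\varphi(f(p))\to+\infty$ together with $f(p)\to+\infty$; the same holds for $G$. For the gradient inequality I would use formula~\eqref{eq10.04.2021.5}: $\nabla_p F=(\varphi'\circ f)(p)\,\nabla_p f$ and $\nabla_p G=(\psi'\circ f)(p)\,\nabla_p f$, hence
\[
\langle\nabla_p F,\nabla_p G\rangle=(\varphi'\circ f)(p)\,(\psi'\circ f)(p)\,\|\nabla_p f\|^{2}\ge 0,
\]
with strict inequality at every $p\notin C(f)$; as $C(f)$ has measure zero by hypothesis, $\langle\nabla F,\nabla G\rangle>0$ almost everywhere, and a fortiori so is $\langle\nabla F,\nabla G\rangle+\|\nabla F\|\cdot\|\nabla G\|$.

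For the remaining two hypotheses I would invoke the results already proved. Since $F$ and $G$ both factor through the single function $f$, Remark~\ref{rem11.04.2021.1}(3) (formula~\eqref{eq16.04.2021.1}) gives ${\rm rank}\,d(F\oplus G)\le 1$ everywhere, i.e. $C(F\oplus G)=\mathbb{R}^n$, so that $R(F\oplus G)=\emptyset$ is trivially bounded. Since $\varphi$ is a convex increasing $C^2$ function with $\varphi'>0$, Proposition~\ref{prop16.04.2021.1} yields ${\rm Hess}^+(f)\subseteq{\rm Hess}^+(\varphi\circ f)={\rm Hess}^+(F)$, whence $\mathbb{R}^n\setminus{\rm Hess}^+(F)\subseteq\mathbb{R}^n\setminus{\rm Hess}^+(f)$ is bounded, and identically for $G$. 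With all hypotheses verified, Theorem~\ref{th05.03.2021.2} delivers that $\mathbb{R}^n\setminus{\rm Hess}^+(FG)=\mathbb{R}^n\setminus{\rm Hess}^+((\varphi\circ f)(\psi\circ f))$ is bounded.

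I do not expect a genuine obstacle: the proof is an orchestration of Theorem~\ref{th05.03.2021.2}, Proposition~\ref{prop16.04.2021.1} and Remark~\ref{rem11.04.2021.1}. The only step carrying any weight is the gradient inequality, and it is exactly there that the zero-measure assumption on $C(f)$ is needed — to upgrade the everywhere-nonnegative inner product $\langle\nabla F,\nabla G\rangle$ to being positive almost everywhere; since the set where it vanishes is contained in $C(f)$, nothing stronger than "measure zero" is used.
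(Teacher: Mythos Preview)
Your proof is correct and follows essentially the same route as the paper: both verify the gradient inequality via \eqref{eq10.04.2021.5}, invoke \eqref{eq16.04.2021.1} to get $R(F\oplus G)=\emptyset$, use Proposition~\ref{prop16.04.2021.1} for the boundedness of the ${\rm Hess}^+$-complements, and then apply Theorem~\ref{th05.03.2021.2}. Your version is actually slightly more complete, since you explicitly verify the coercivity of $\varphi\circ f$ and $\psi\circ f$ (needed for Theorem~\ref{th05.03.2021.2}), which the paper's proof leaves implicit.
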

	\begin{proof} We first observe that, under the hypothesis on the functions $\varphi$ and $\psi$, we have
		\begin{align}
			\langle \nabla (\varphi\circ f),\nabla(\psi\circ f)\rangle+\|\nabla (\varphi\circ f)\|\cdot\|\nabla(\psi\circ f)\| =  
			2(\varphi'\circ f)(\psi'\circ f)\|\nabla f\|^2>0,\nonumber
		\end{align}
		almost everywhere, as $\varphi'\circ f, \ \psi'\circ f>0$ and $\|\nabla f\|^2>0$ almost everywhere. By using Proposition 
		\label{prop17.04.2021.9}, one can deduce that ${\rm Hess}^+(f)\subseteq {\rm Hess}^+(\varphi\circ f)\cap {\rm Hess}^+(\psi\circ f)$,
		i.e.  
		\[
		\mathbb{R}^n\setminus{\rm Hess}^+(\varphi\circ f), \ \mathbb{R}^n\setminus{\rm Hess}^+(\psi\circ f)\subseteq
		\mathbb{R}^n\setminus{\rm Hess}^+(f),
		\]
		which shows that $\mathbb{R}^n\setminus{\rm Hess}^+(\varphi\circ f), \ \mathbb{R}^n\setminus{\rm Hess}^+(\psi\circ f)$ 
		are both bounded, as $\mathbb{R}^2\setminus{\rm Hess}^+(f)$ is bounded by hypothesis. 
		On the other hand \[{\rm rank}[d(\varphi\circ f)\oplus d(\psi\circ f)]\le 1\] due to result \eqref{eq16.04.2021.1}. Therefore the functions $\varphi\circ f$ and $\psi\circ f$ 
		satisfy the requirements of Theorem \ref{th05.03.2021.2}
		which shows that $\mathbb{R}^n\setminus{\rm Hess}^+(\varphi\circ f)(\psi\circ f)$ is bounded.
	\end{proof}
	In the particular example we are going to analyse in the next section the level sets will be compact as the involved functions are norm-coercive.	Moreover the critical sets were finite and therefore, above its greatest critical value, the level sets are also regular. 
	Finally, the level curves above a certain regular level are actually, not only regular, but also convex. 
	In the last theorem of this section we shall show that this is actually the case whenever the function is norm-coercive 
	and the critical set along with the complement of the ${\rm Hess}^+$ region are both compact.
	\begin{theorem}\label{th:05.01.2022.1}
	Let $f:\mathbb{R}^2\longrightarrow\mathbb{R}$ be a $C^2$-smooth norm-coercive function. If $C(f)$ and $\mathbb{R}^2\setminus {\rm Hess}^+(f)$ are bounded, then the level $f^{-1}(c)$ is compact connected regular and convex for $c$ sufficiently large. 
	\end{theorem}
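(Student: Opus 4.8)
The plan is to exploit the norm-coercivity of $f$ together with the boundedness of $C(f)$ and $\mathbb{R}^2\setminus{\rm Hess}^+(f)$ to conclude that, above a sufficiently large level, every level set lies entirely in the ``nice'' region where $f$ has no critical points and positive-definite Hessian, and then read off all four properties from there. First I would fix $r>0$ so large that the closed ball $\overline{B}(0,r)$ contains both $C(f)$ and $\mathbb{R}^2\setminus{\rm Hess}^+(f)$; consequently, on $\mathbb{R}^2\setminus\overline{B}(0,r)$ the gradient $\nabla f$ never vanishes and $H_x(f)$ is positive definite. Since $f$ is norm-coercive, $M:=\max_{\|x\|\le r}f(x)$ is finite, and for any $c>M$ the level set $f^{-1}(c)$ is disjoint from $\overline{B}(0,r)$, hence contained in the region of regular points with positive-definite Hessian.

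Next I would establish compactness and regularity. Norm-coercivity gives that $f^{-1}(c)$ is closed and bounded, hence compact; and for $c>M$ it contains no critical point, so it is a compact $1$-dimensional $C^2$ submanifold of $\mathbb{R}^2$, i.e.\ a finite disjoint union of embedded circles. For connectedness I would argue that $f^{-1}(c)$ bounds the sublevel set $f^{-1}((-\infty,c])$, which is compact and contains the connected set $\overline{B}(0,r)$ (indeed contains all of $f^{-1}((-\infty,M])$); using that $f$ has no critical values above $M$, the sublevel sets $f^{-1}((-\infty,c])$ for $c>M$ are all diffeomorphic (via the gradient flow, a standard Morse-theoretic isotopy argument, since there are no critical points in the relevant range and the level sets stay in a region where $\nabla f\neq0$), so $f^{-1}((-\infty,c])$ is connected with connected boundary a single circle. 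Thus $f^{-1}(c)$ is a single embedded circle, hence compact, connected and regular.

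Finally, for convexity I would invoke the fact that on $f^{-1}(c)$, which lies in ${\rm Hess}^+(f)$, the second fundamental form of the level curve (with respect to the normal $-\nabla f/\|\nabla f\|$ pointing into the sublevel set) is, up to the positive factor $1/\|\nabla f\|$, the restriction of ${\mathcal H}_f$ to the tangent line of the curve; since $H_x(f)$ is positive definite there, this restriction is positive, so the curvature of $f^{-1}(c)$ has a constant sign. A simple closed $C^2$ plane curve with nowhere-vanishing curvature of constant sign is convex (it bounds a convex region), which is exactly the assertion. The main obstacle I anticipate is the connectedness step: ruling out that $f^{-1}(c)$ has several components requires knowing the topology of the sublevel sets stabilizes above $M$, which needs either the gradient-flow isotopy argument (care needed because the flow lines must be shown to stay in a compact region, again using coercivity) or an a priori argument that each component of $f^{-1}(c)$ must enclose part of $\overline{B}(0,r)$ — and one must check two disjoint convex curves at the same level cannot both do so. I would lean on the gradient-flow/Morse-theory argument as the cleanest route, citing that $f$ restricted to $\{M<f<c'\}$ for $c'$ large is a proper submersion onto an interval, hence a trivial fiber bundle by Ehresmann's theorem, forcing all level sets in that range to be diffeomorphic to a single fixed one, which must be a circle by the convexity/curvature argument applied at one convenient level.
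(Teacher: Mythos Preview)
Your treatment of compactness, regularity, and convexity is correct and matches the paper's approach: the paper likewise fixes a closed ball $B$ containing $C(f)\cup\big(\mathbb{R}^2\setminus{\rm Hess}^+(f)\big)$, takes $c>\max_{B}f$, and deduces convexity from the positivity of the Hessian along $f^{-1}(c)$ via the determinant $D(f)=-(-f_y,f_x)H(f)(-f_y,f_x)^T$, which is exactly your second-fundamental-form observation.

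The gap is in the connectedness step. Your preferred route (gradient flow/Ehresmann) only shows that all level sets $f^{-1}(c)$ with $c>M$ are mutually diffeomorphic; it does not, by itself, show that any one of them is connected. You then write that the common diffeomorphism type ``must be a circle by the convexity/curvature argument'', but the curvature argument only shows that each connected component is a convex Jordan curve, not that there is a single component. One can repair the Morse-theoretic route by noting that the sublevel sets exhaust $\mathbb{R}^2$, so each is homotopy equivalent to a contractible space and hence is itself contractible, but you do not make that step.

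The paper bypasses Morse theory entirely with the elementary argument that your ``Route~2'' gestures at. For each component $C$ of $f^{-1}(c)$, the restriction $f|_{{\rm cl\,int}(C)}$ has an interior extremum (since $f\equiv c$ on $\partial\,{\rm cl\,int}(C)$), hence ${\rm int}(C)$ contains a critical point of $f$ and therefore meets $B$; since $B$ is connected and disjoint from $f^{-1}(c)$, in fact $B\subset{\rm int}(C)$. Thus the components are nested, $B\subset{\rm int}(C_1)\subset\cdots\subset{\rm int}(C_k)$, and the same interior-extremum argument applied to each annulus ${\rm cl\,int}(C_{i+1})\setminus{\rm int}(C_i)$ forces a critical point outside $B$, a contradiction unless $k=1$. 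Note that your phrase ``two disjoint convex curves at the same level cannot both enclose $\overline{B}(0,r)$'' is false as stated (nested curves can), and it is precisely this annulus step that finishes the argument.
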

	\begin{remark}\label{rem:15.01.2022.1}
	To select the convex sublevel sets of a certain function we rely on \cite[Proposition 1, p. 397]{Carmo} combined \cite[Formula 3.7]{Goldman}, by making also sure that such a sublevel set is a priori connected. In this respect we need to first 
	select those levels of $f$ along which the determinant 
	\[
	D(f):=\left|
\begin{array}{lll}
f_{xx} & f_{xy} & f_x\\
f_{yx} & f_{yy} & f_y\\
f_x & f_y & 0
\end{array}
\right|
	\]
	keeps constant sign and select, among these level sets those which are also connected. Note that the restriction of $ D(f)$
to the region ${\rm Hess}^+(f)$ is negative, as
\begin{align}
D(f) & =2f_xf_yf_{xy}-f_x^2f_{yy}-f_y^2f_{xx} =-(-f_y,f_x)H(f)(-f_y,f_x)^T\nonumber\\
& =-\mathcal{H}_f((r_y\circ r_b)(\nabla f),(r_y\circ r_b)(\nabla f))<0\mbox{ over }{\rm Hess}^+(f),\nonumber
\end{align}
where $r_b,r_y:\mathbb{R}^2\longrightarrow\mathbb{R}^2$ are the reflections in the first bisector $b$ and in the $y$-axis respectively.
	\end{remark}
	\begin{proof}[Proof of Theorem \ref{th:05.01.2022.1}]
	 Since $C(f)$ and $\mathbb{R}^2\setminus {\rm Hess}^+(f)$ are both closed, the boundedness additional assumption on them assures their compactness.
	 We shall show that for $c>c_0:=\max_{x\in B}f(x)$ the level curve $f^{-1}(c)$ is compact connected, regular and convex, where $B$ is a closed ball 
	 which contains both the critical set and $\mathbb{R}^2\setminus {\rm Hess}^+(f)$. The compactness of all level sets of $f$ follows from the norm-coercivity of $f$. 
	 For $c>c_0:=\max_{x\in B}f(x)$ the level curve $f^{-1}(c)$ is obviously regular, as it contains no critical points and has finitely many connected components as
	 it is also compact. Let us also observe that the closed ball $B$ is contained in the interior

	 of every connected component of $f^{-1}(c)$, as $B$ has no common points with $f^{-1}(c)$ and cannot be contained in the exterior of any 
	 connected component $C$ of $f^{-1}(c)$. Indeed the lack of critical points in the interior of a certain $C$ is impossible as the restriction $f|_{{\rm cl ~ int}(C)}$ 
	 of $f$ to the compact set ${\rm cl ~ int}(C)$ has at least a minimum point and a maximum point, one of which does not belong to the boundary $C$ of ${\rm cl ~ int}(C)$ but to its interior ${\rm int}(C)$. 
	 This shows that the minimum points of this restriction are actually critical points of $f$. We therefore showed that the interior of every connected component 
	 of $f^{-1}(c)$ contains critical points of $f$ and with them these interiors contain the whole ball $B$, as $B$ is convex and has no common points with any of them.
	 Therefore the critical set $C(f)$, the ball $B$ and the connected components $C_1,\ldots,C_k$ of $f^{-1}(c)$, with a suitable indexing, are arranged as
	 \[
	  C(f)\subseteq B\subset {\rm int}(C_1)\subset\cdots\subset{\rm int}(C_k).
	 \]
	 Since the compact manifolds with boundary
	 \begin{equation}\label{eq:05.01.2022.1}
	 {\rm cl ~ int}(C_{i+1})\setminus{\rm int}(C_i), \ 1\leq i\leq k-1
	 \end{equation}
	 are compact and $f$ is the constant $c$ on the two components of each of their boundary, 
	 it follows that the minimum points of the restrictions of $f$ to the compact sets \eqref{eq:05.01.2022.1} belong to their interiors 
	 ${\rm int}(C_{i+1})\setminus{\rm cl ~ int}(C_i), \ 1\leq i\leq k-1$ and are therefore  critical points of $f$. But this is a contradiction with the relation
	 $C(f)\subseteq {\rm int}(C_1)$.
	  For $c>h_{\max}(f):=\max\{f(x) : x\in \mathbb{R}^2\setminus {\rm Hess}^+(f)\}$ we also have $\mathbb{R}^2\setminus {\rm Hess}^+(f)\subseteq f^{-1}(-\infty,c)\Longleftrightarrow f^{-1}[c,+\infty)\subseteq {\rm Hess}^+(f),$
	 i.e. all level sets $f^{-1}(c)$ are contained in ${\rm Hess}^+(f)$ for $c>h_{\max}(f)$ and they are convex, as the restriction 
	 of $D(f)$ to  due to Remark \ref{rem:15.01.2022.1}. 
	\end{proof}
	\begin{corollary}
	 Every two level curves of a function $f$ subject to the hypothesis of Theorem \ref{th:05.01.2022.1}, above the level $\mu_{\max}(f):=\max_{x\in C(f)}f(x)$,
	 are connected and diffeomorphic to the unit circle $S^1$.
	\end{corollary}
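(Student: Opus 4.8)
The plan is to combine the absence of critical values above $\mu_{\max}(f)$ with the conclusion of Theorem~\ref{th:05.01.2022.1}. Note first that $\mu_{\max}(f)=\max_{x\in C(f)}f(x)$ is well defined: $f$ is norm-coercive, hence it attains a global minimum, so $C(f)$ is a nonempty compact set and $f|_{C(f)}$ attains its maximum. Since $f$ is norm-coercive it is proper, so every level $f^{-1}(c)$ is compact; and for $c>\mu_{\max}(f)$ the level $f^{-1}(c)$ meets no point of $C(f)$, hence it is a compact $1$-dimensional submanifold of $\mathbb{R}^2$ without boundary. By the classification of compact $1$-manifolds it is a finite disjoint union of circles, so the statement reduces to showing that this union consists of a single circle for every $c>\mu_{\max}(f)$.

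Next I would run the standard deformation argument (the ``first fundamental theorem of Morse theory'') on a critical-value-free slab. Fix $\mu_{\max}(f)<a<b$. The compact set $f^{-1}([a,b])$ contains no critical points, so $\nabla f$ does not vanish on it; multiplying a smooth bump function supported near $f^{-1}([a,b])$ by $\nabla f/\|\nabla f\|^2$ produces a compactly supported smooth vector field $X$ on $\mathbb{R}^2$ that is complete and satisfies $X(f)\equiv 1$ on a neighbourhood of $f^{-1}([a,b])$. Its time-$t$ flow maps then carry $f^{-1}(a)$ diffeomorphically onto $f^{-1}(a+t)$ for $0\le t\le b-a$ and exhibit $f^{-1}([a,b])\cong f^{-1}(a)\times[a,b]$. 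In particular the number of connected components of $f^{-1}(c)$ is independent of $c\in(\mu_{\max}(f),+\infty)$, and any two such level curves are diffeomorphic to one another.

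It remains only to evaluate this common number of components. By Theorem~\ref{th:05.01.2022.1} there is a threshold $c_\ast$ so that $f^{-1}(c)$ is compact, connected, regular and convex for all $c>c_\ast$; choosing any $c>\max\{c_\ast,\mu_{\max}(f)\}$ shows that $f^{-1}(c)$ is a single circle. Hence, by the previous paragraph, $f^{-1}(c)$ is connected for every $c>\mu_{\max}(f)$, and being a compact connected boundaryless $1$-manifold it is diffeomorphic to $S^1$. Consequently any two level curves above $\mu_{\max}(f)$ are connected, each diffeomorphic to $S^1$, and diffeomorphic to each other (concretely, via the gradient flow of the second step).

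The one delicate point is the completeness of the flow in the second step: one cannot integrate $\nabla f/\|\nabla f\|^2$ globally, since it blows up near $C(f)$ and need not be integrable at infinity, so the argument has to be localised to a neighbourhood of the compact slab $f^{-1}([a,b])$ and the vector field cut off away from it, exactly as in the classical proof that regular level sets in a critical-value-free interval are diffeomorphic. Everything else is bookkeeping with the classification of compact $1$-manifolds and the statement of Theorem~\ref{th:05.01.2022.1}.
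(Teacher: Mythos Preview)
Your proof is correct and follows essentially the same route as the paper: the paper's one-line argument invokes Theorem~\ref{th:05.01.2022.1}, the classification of compact $1$-manifolds, and the Non-Critical Neck Principle from Palais--Terng, which is precisely the gradient-flow deformation between regular levels that you spell out explicitly in your second paragraph. You have simply unpacked the cited principle in detail rather than quoting it.
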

	\begin{proof}
	 The statement follows by combining Theorem \ref{th:05.01.2022.1} with the classification theorem of $1$-dimensional manifolds 
	 and the Non-Critical Neck Principle \cite[p. 194]{Palais-Terng}.
	\end{proof}
	\begin{corollary}\label{cor:05.01.2022.1}
	 The level curve $f^{-1}(h_{\max}(f))$ of a function $f$ subject to the hypothesis of Theorem \ref{th:05.01.2022.1} is convex, 
	 whenever $h_{\max}(f)\geq \mu_{\max}(f)$.
	\end{corollary}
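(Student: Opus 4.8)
The plan is to read the convexity of a level curve in the customary way --- that the sublevel set it bounds is convex --- and then to present the sublevel set $f^{-1}\bigl((-\infty,h_{\max}(f)]\bigr)$ as a decreasing intersection of convex sublevel sets sitting at levels strictly above $h_{\max}(f)$, so that its convexity follows from the fact that an intersection of convex sets is convex.

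First I would choose a sequence $c_n\downarrow h_{\max}(f)$ with $c_n>h_{\max}(f)$ for all $n$. Since $h_{\max}(f)\ge\mu_{\max}(f)$, each $c_n$ lies strictly above every critical value of $f$, so by the preceding corollary the level curve $f^{-1}(c_n)$ is connected, regular and diffeomorphic to $S^1$; and since $c_n>h_{\max}(f)$, the argument in the proof of Theorem \ref{th:05.01.2022.1} puts $f^{-1}(c_n)$ inside ${\rm Hess}^+(f)$, where $D(f)<0$ by Remark \ref{rem:15.01.2022.1}. Combining connectedness with this sign of $D(f)$ through \cite[Proposition 1, p. 397]{Carmo} and \cite[Formula 3.7]{Goldman} shows that $f^{-1}(c_n)$ is convex; and as $f$ is norm-coercive, this single Jordan curve bounds the bounded domain $f^{-1}\bigl((-\infty,c_n)\bigr)$, whose closure $K_n:=f^{-1}\bigl((-\infty,c_n]\bigr)$ is therefore compact and convex.

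Next I would observe that the $K_n$ are nested and $\bigcap_n K_n=f^{-1}\bigl((-\infty,h_{\max}(f)]\bigr)=:K$, so $K$ is compact and convex. In the principal case $h_{\max}(f)>\mu_{\max}(f)$ the level $h_{\max}(f)$ is regular, $K$ has nonempty interior, and $f^{-1}(h_{\max}(f))$ is connected by the preceding corollary, hence a single Jordan curve which must coincide with $\partial K$; thus $f^{-1}(h_{\max}(f))$ is a convex level curve bounding the convex sublevel set $K$. The step I expect to be the main obstacle is the borderline case $h_{\max}(f)=\mu_{\max}(f)$: there $f^{-1}(h_{\max}(f))$ meets the critical set and need not be a smooth curve, and one has to use the convexity of the approximating bodies $K_n$ to exclude a pinched (figure-eight) limit --- a nested intersection of convex sets cannot be such --- which forces $K$ to collapse to a single point, namely the global minimum (then the only critical point of $f$), so that the assertion holds trivially. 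The recurrent difficulty, in short, is to transfer information from the sublevel sets back to the level set itself.
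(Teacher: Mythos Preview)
Your core argument --- write the sublevel set $f^{-1}\bigl((-\infty,h_{\max}(f)]\bigr)$ as the nested intersection $\bigcap_n f^{-1}\bigl((-\infty,c_n]\bigr)$ with $c_n\downarrow h_{\max}(f)$ and invoke the convexity of each approximating sublevel set from Theorem~\ref{th:05.01.2022.1} --- is exactly the paper's proof (which takes $c_n=h_{\max}(f)+\tfrac{1}{n}$). Recall that in this paper a level set is called \emph{convex} precisely when it bounds a convex sublevel set, so once $K=f^{-1}\bigl((-\infty,h_{\max}(f)]\bigr)$ is shown to be convex you are done; no further structural information about the curve itself is required.

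Your treatment of the borderline case $h_{\max}(f)=\mu_{\max}(f)$ is therefore unnecessary, and the specific claim there is not right: the convexity of the $K_n$ does not force $K$ to ``collapse to a single point,'' nor does it force $f$ to have a unique critical point. The intersection $K$ is simply a compact convex body (possibly with nonempty interior) whose boundary may happen to contain critical points of $f$; nothing more can be said in general, and nothing more is needed. The paper itself glosses over this case by asserting that $f^{-1}(h_{\max}(f))$ is a ``connected regular curve,'' which strictly speaking also presupposes $h_{\max}(f)>\mu_{\max}(f)$, but the convexity conclusion (in the sense used here) survives either way because it refers only to the sublevel set.
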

\begin{proof}
The level curve $f^{-1}(h_{\max}(f))$ remains convex, as it is a connected regular curve bounding the convex sublevel set 
\begin{equation}\label{eq:05.01.2022.2}
f^{-1}\left(-\infty,h_{\max}(f)\right].  
\end{equation}
The convexity of \eqref{eq:05.01.2022.2} follows immediately out of its representation
\begin{align}\label{eq:27.12.2021.3}
f^{-1}\left(-\infty,h_{\max}(f)\right] & =\bigcap_{n\geq 1}f^{-1}\left(-\infty,h_{\max}(f)+\frac{1}{n}\right],\nonumber  
\end{align}
and the convexity of all sublevel sets
\[
f^{-1}\left(-\infty,h_{\max}(f)+\frac{1}{n}\right], \ n\geq 1.
\]
In fact $f^{-1}(h_{\max}(f))$ is the first convex level curve of $f$ completely contained in ${\rm cl}~{\rm Hess}^+(f)$.
\end{proof}

	\section{The {\rm Hess}$^+$ region and the levels of a product of functions}\label{SomeProductsOfFunctions}

	\begin{example}\label{ex:15.01.2022.1}
		We will describe the region ${\rm Hess}^+(f_ag_a)$, where $f_a,g_a:\mathbb{R}^2\longrightarrow\mathbb{R}$ are given by  \[f_a(x,y)=(x^2+y^2)^2+2a^2(x^2-y^2)
		\mbox{ and }g_a(x,y)=(x^2+y^2)^2+2a^2(x^2-y^2).
		\] 
		This can be done through the characterization 
		$$p\in {\rm Hess}^+(f_ag_a)\Longleftrightarrow{\rm Tr}(H_p(f_ag_a))>0\mbox{ and }\det(H_p(f_ag_a))>0.$$ 
		Elementary, but not short, computations show that 
		\begin{align}
		{\rm Tr}H_{(x,y)}(fg) & =f(x,y)\Delta g+g(x,y)\Delta f+2f_xg_x+2f_yg_y=64s^3-64dst-32sa^2b^2\nonumber\\
		\det H_{(x,y)}(fg) & = 64(7s^6+14s^4dt+11s^2d^2t^2  -4s^4d^2-12a^4b^4t^2-28a^2b^2s^4\nonumber\\
		& -4a^2b^2s^2dt + 36a^2b^2s^2t^2+12a^2b^2t^3d ),\nonumber
		\end{align}
 where $s=x^2+y^2,t=x^2-y^2,d=b^2-a^2$. Therefore		
		${\rm Tr}(H_{(x,y)}(f_ag_a))>0$ if and only if $x^2+y^2>\frac{a^2}{\sqrt{2}}$ and $\det(H_p(f_ag_a))>0$ if and only if $h(x,y)>0$, where
		\[h(x,y)=7(x^2+y^2)^6-12a^8(x^2-y^2)^2-28a^4(x^2+y^2)^4+36a^4(x^2+y^2)^2(x^2-y^2)^2.\]
		Moreover the restriction of the determinant 
		\[D(f_ag_a):=
		\begin{vmatrix} (f_ag_a)_{xx}  & (f_ag_a)_{xy}& (f_ag_a)_x \\ (f_ag_a)_{xy} & (f_ag_a)_{yy} & (f_ag_a)_y \\ (f_ag_a)_x & (f_ag_a)_y & 0\end{vmatrix}
		\]
		to the level set $f_ag_a=b$ is
		\[-2^8[3a^4(x^2+y^2)^8+5b(x^2+y^2)^6-12a^4b(x^2+y^2)^4-3b^2(x^2+y^2)^2+a^4b^2].
		\]
		In order to find the extreme points of the restriction of $D(f_ag_a)$ to the level set $f_ag_a=b$ we consider the associated Lagrange function \[\mathcal{L}(x,y,\lambda)=3a^4s^8+5bs^6-12a^4bs^4-3b^2s^2-\lambda(s^4-4a^4t^2-b)\]
		where $s$ stands for $x^2+y^2$ and $t$ stands for $x^2-y^2$.
		We are only interested about these local extremma for $b>0$, as $b=0$ is a critical value of the product $f_ag_a$ and for $b<0$ the level curve $f_ag_a=b$ is not connected. Indeed, the intersection of this level curve with the line $y=x$ is empty for $b<0$, as can be easily checked, and its intersection with the line $y=mx$ is not empty for $m\neq 1$ suitably chosen and the points of this intersection are located on the two sides of the line $y=x$. If on the contrary $b>0$, then the $b$-level  curve of the product $f_ag_a$ is regular and diffeomorphic to the unit circle via the diffeomorphism
		\[
		f:S^1\longrightarrow (f_ag_a)^{-1}(b), \ f(u,v)=\sqrt[4]{2a^4(v^2-u^2)+\sqrt{4a^8(v^2-u^2)^2+b}}\cdot(u,v)
		\]
		\begin{figure}[ht]
			\includegraphics[width=12cm]{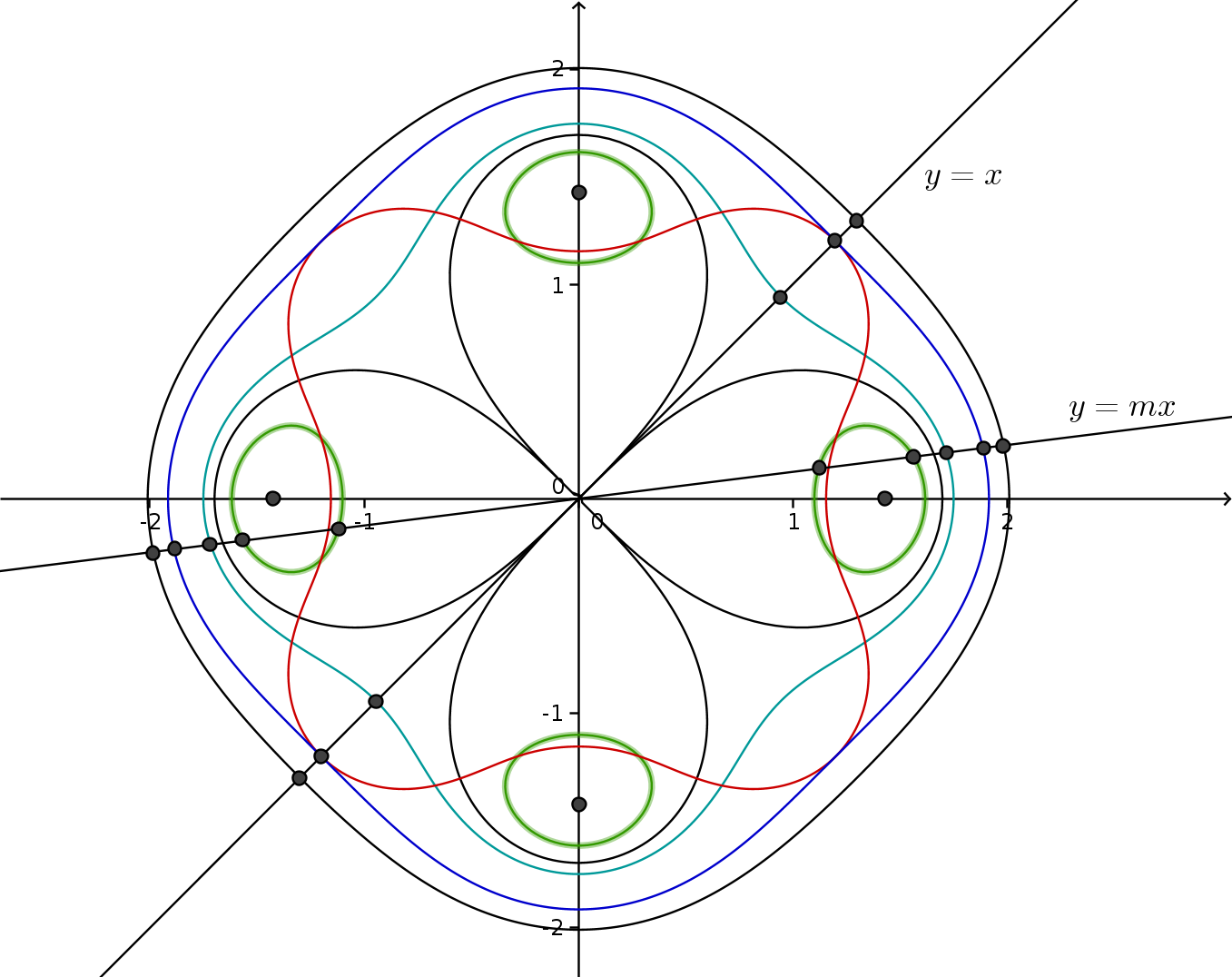}
			\caption{\textcolor{green}{A non-connected negative regular level of the product $f_ag_a$ with four components}, the critical zero level curve, \textcolor{cyan}{a connected non-convex level curve diffeomorphic to the unit circle $S^1$}, \textcolor{red}{the boundary of ${\rm Hess}^+(f_ag_a)$},  \textcolor{blue}{the first positive connected regular and convex level of $f_ag_a$} and a higher connected regular and convex level of $f_ag_a$ ($a=1.2$)}
			\label{ConnectedAndNonconnectedLevelSetsOfAProductOfFunctions}
		\end{figure}
		The equation 
		$\nabla \mathcal{L}(x,y,\lambda)=0$ is equivalent to the system
		\begin{equation}\label{eq01.09.2021.1}
			\begin{cases} x(48a^4s^7+60bs^5-96a^4bs^3-12b^2s-8\lambda s^3+16\lambda a^4t)=0 \\ y(48a^4s^7+60bs^5-96a^4bs^3-12b^2s-8\lambda s^3-16\lambda a^4t)=0 \\ s^4-4a^4t^2-b=0,\qquad \qquad 
			\end{cases} 
		\end{equation}
		which is further equivalent with
		\[
		\left\{
		\begin{array}{ll}
		x=0 \\ y=0\\
		b=0,\qquad \qquad 
		\end{array}
		\right.\!\!\!\!\!\!\!\!\!\!\!\!\!\!\!\!\!\!\!\!\!\!\!\!\mbox{ or }
		\left\{
		\begin{array}{ll}
		x=0 \\48a^4s^7+60bs^5-96a^4bs^3-12b^2s-8\lambda s^3+16\lambda a^4s=0 \\ y^8-4a^4y^4-b=0
		\end{array}
		\right.
		\]
		or 
		\[
		\left\{\begin{array}{ll}
		48a^4s^7+60bs^5-96a^4bs^3-12b^2s-8\lambda s^3+16\lambda a^4s=0=0 \nonumber\\
		y=0 \nonumber\\ 
		x^8-4a^4x^4-b=0\nonumber
		\end{array}
		\right.
		\]
		or 
		\[
		\left\{
		\begin{array}{ll} 48a^4s^7+60bs^5-96a^4bs^3-12b^2s-8\lambda s^3+16\lambda a^4t=0\nonumber \\ 48a^4s^7+60bs^5-96a^4bs^3-12b^2s-8\lambda s^3-16\lambda a^4t=0 \nonumber\\ s^4-4a^4t^2-b=0.\nonumber
		\end{array}
		\right.
		\]
		The solution set of the system \eqref{eq01.09.2021.1} 

		for $b>0$ is
		\[\left\{\left(\pm \frac{\sqrt[8]{b}}{\sqrt{2}},\pm \frac{\sqrt[8]{b}}{\sqrt{2}},6b(\sqrt[4]{b}-a^4)\right),(\alpha,0,\lambda),\left(0,\alpha,\lambda\right)\right\}
		\]
		where 
		\[
		\alpha=\pm\sqrt[4]{2a^4+\sqrt{4a^8+b}}\mbox{ and }\lambda= 6 
		\frac{(2a^4\alpha^4+b)^2}{\sqrt{4a^8+b}}.
		\]
	\end{example}
	Recall that the restriction of the determinant function $D(f_ag_a)$ to the level curve $f_ag_a=b$ is
	\[-2^8[3a^4(x^2+y^2)^8+5b(x^2+y^2)^6-12a^4b(x^2+y^2)^4-3b^2(x^2+y^2)^2+a^4b^2]
	\]
	and its values over the solution set of the system \eqref{eq01.09.2021.1}, for $b>0$, are
	\[
	2^9b^2(4a^4-\sqrt{b}),\mbox{ and }-2^9 (4 a^8 + b) (24 \alpha^4 a^8 + 6 a^4 b + \alpha^4 b).
	\]
	Since the second value is obviously negative, the determinant $D(f_ag_a)$ is not changing its sign on the $b$-level curve of the product 
	$f_ag_a$ if and only if \[4a^4-\sqrt{b}<0\Leftrightarrow b>16a^8.\] 
	In other words the $b$-level curve of the product $f_ag_a$ is convex whenever $b\geq 16a^8$ and $f_ag_a^{-1}(16a^8)$ 
	is the first convex level curve of $f_ag_a$, as can be easily show by using a similar argument with the one used for the proof of Corollary 
	\ref{cor:05.01.2022.1}.
	\begin{figure}[ht]
			\includegraphics[width=12cm]{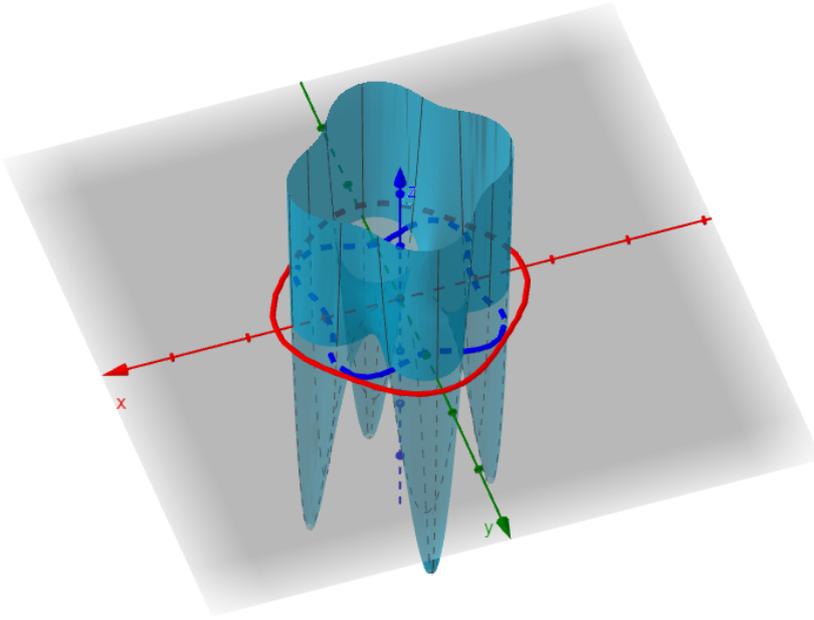}
			\caption{A piece of the graph of $f_1g_1$. \textcolor{blue}{The curve $\partial{\rm Hess}^+(f_1g_1)=h^{-1}\{0\}$}
				and \textcolor{red}{the first convex level set $(f_1g_1)^{-1}(16)$}}
			\label{Graph-of-a-product}
		\end{figure}
	
	\begin{remark} 
	For the function $f_ag_a$, the inequality $h_{max}(f_ag_a)\ge \mu_{max}(f_ag_a)$ holds. Indeed, on one hand we have $\mu_{max}(f_ag_a)=\max_{x\in C(f_ag_a)} (f_ag_a)(x)=\max B(f_ag_a)=0,$ and on the other hand, $h_{max}(f_ag_a)=\max\{(f_ag_a)(x)\mid x\in \mathbb{R}^2\setminus {\rm Hess^+}(f_ag_a)\}\ge (f_ag_a)(0,0)=0,$ as 
	\[
	{\rm Tr}~H_{(0,0)}(f_ag_a)=\det H_{(0,0)}(f_ag_a)=0
	\]
	and $(0,0) \in  \mathbb{R}^2\setminus {\rm Hess^+}(f_ag_a)$ therefore.
	The inequalities \[h_{max}(d_p^2d_q^2)\ge \mu_{max}(d_p^2d_q^2)\mbox{ and }h_{max}(f^2g^2)\ge \mu_{max}(f^2g^2),
	\]
	where $d_p:\mathbb{R}^2\longrightarrow\mathbb{R}$, $d_p(x)=\|x-p\|$ and $f=\sqrt{1+d_p^2}$, $g=\sqrt{1+d_q^2}$, $p,q\in\mathbb{R}^2$ also hold
	and will be treated in a forthcoming paper.
	\end{remark}


\begin{thebibliography}{9}
		\bibitem{Carmo} Carmo, M. do, Differential geometry of curves and surfaces, Prentice Hall, Inc., 1976.
		\bibitem{Goldman} Goldman, G., Curvature formulas for implicit curves and surfaces, Comput. Aided Geom. Design, 22 (2005) 632--658.
		\bibitem{Montiel-Ros} Montiel S., Ros, A., Curves and Surfaces, American Mathematics Society, Graduate Studied in Mathematics, Vol. 69, 2005.
		\bibitem{Palais-Terng} Palais, R.S., Terng, C.-L., Critical point theory and submanifold geometry, Lecture
		Notes in Mathematics 1353, Springer-Verlag, Berlin, 1988.
		\bibitem{Pintea} Pintea, C., Convex decompositions of convex open sets with polytopes or finite sets removed,  J. Convex Anal., 26 (2) (2019), 687--698.
		\bibitem{Pintea-Tofan} Pintea, C., Tofan, A., Convex decompositions and the valence of some functions, J. Nonlinear Var. Anal., 4 (2) (2020), 225-239.
		\bibitem{Pintea-Trif} Pintea, C., Trif, T., The monotonicity of perturbed gradients of convex functions, J. Convex Anal., 24 (2) (2017), 525-545.
		\bibitem{Rybnikov} Rybnikov, K. On convexity of hypersurfaces in the hyperbolic space. Geom Dedicata 136, 123–--131 (2008).
		\bibitem{Zalinescu} Z\u alinescu, C., Convex analysis in general vector spaces, World Scientific, 2002. 
	\end{thebibliography}
	\end{document}